\newcommand*\patchAmsMathEnvironmentForLineno[1]{%
  \expandafter\let\csname old#1\expandafter\endcsname\csname #1\endcsname
  \expandafter\let\csname oldend#1\expandafter\endcsname\csname end#1\endcsname
  \renewenvironment{#1}%
     {\linenomath\csname old#1\endcsname}%
     {\csname oldend#1\endcsname\endlinenomath}}%
\newcommand*\patchBothAmsMathEnvironmentsForLineno[1]{%
  \patchAmsMathEnvironmentForLineno{#1}%
  \patchAmsMathEnvironmentForLineno{#1*}}%
\renewcommand{\subsectionmark}[1]{}
\newenvironment{enumeratearabic*}{
\begin{enumerate*}[label=(\arabic*)] %
}{
\end{enumerate*}
}
\newenvironment{enumerateroman*}{
\begin{enumerate*}[label=(\roman*)] %
}{
\end{enumerate*}
}
\numberwithin{equation}{section}
\newtheorem{theoremcounter}{theoremcounter}[section]
\theoremstyle{plain}
\newtheorem{lemma}[theoremcounter]{Lemma}
\newtheorem{proposition}[theoremcounter]{Proposition}
\newtheorem{theorem}[theoremcounter]{Theorem}
\theoremstyle{plain}
\theoremstyle{definition}
\theoremstyle{remark}
\newtheorem{remark}[theoremcounter]{Remark}
\theoremstyle{nonumberremark}
\newtheorem{remarkcomputation}{Computation}
 \newcommand{\texpdf}[2]{#1}
 \newcommand{\texpdf}[2]{\texorpdfstring{#1}{#2}}
\newcommand{\tx}{\ensuremath{\text}}
\newcommand{\tbf}{\bfseries}
\newcommand{\nbd}{\nobreakdash-\hspace{0pt}}
\renewcommand{\frak}{\ensuremath{\mathfrak}}
\newcommand{\frakg}{\ensuremath{\frak{g}}}
\newcommand{\rmA}{\ensuremath{\mathrm{A}}}
\newcommand{\rmH}{\ensuremath{\mathrm{H}}}
\newcommand{\rmI}{\ensuremath{\mathrm{I}}}
\newcommand{\rmK}{\ensuremath{\mathrm{K}}}
\newcommand{\rmL}{\ensuremath{\mathrm{L}}}
\newcommand{\rmM}{\ensuremath{\mathrm{M}}}
\newcommand{\rmR}{\ensuremath{\mathrm{R}}}
\newcommand{\rmU}{\ensuremath{\mathrm{U}}}
\newcommand{\td}{\tilde}
\newcommand{\wtd}{\widetilde}
\newcommand{\ov}{\overline}
\newcommand{\ra}{\ensuremath{\rightarrow}}
\newcommand{\lra}{\ensuremath{\longrightarrow}}
\newcommand{\mto}{\ensuremath{\mapsto}}
\newcommand{\ZZ}{\ensuremath{\mathbb{Z}}}
\newcommand{\RR}{\ensuremath{\mathbb{R}}}
\newcommand{\CC}{\ensuremath{\mathbb{C}}}
\renewcommand{\Re}{\ensuremath{\mathrm{Re}}}
\renewcommand{\Im}{\ensuremath{\mathrm{Im}}}
\newcommand{\sgn}{\ensuremath{\mathrm{sgn}}}
\newenvironment{psmatrix}{\left(\begin{smallmatrix}}{\end{smallmatrix}\right)}
\newcommand{\Mat}[1]{\ensuremath{\mathrm{Mat}_{#1}}}
\newcommand{\GL}[1]{\ensuremath{\mathrm{GL}_{#1}}}
\newcommand{\SL}[1]{\ensuremath{\mathrm{SL}_{#1}}}
\newcommand{\Mp}[1]{\ensuremath{\mathrm{Mp}_{#1}}}
\newcommand{\SO}[1]{\ensuremath{\mathrm{SO}_{#1}}}
\newcommand{\HS}{\mathbb{H}}
\newcommand{\rmIsm}{\rmI^{\mathrm{sm}}}
\newcommand{\Lie}{\mathrm{Lie}}
\newcommand{\gK}{(\frakg,K)}
\newcommand{\trace}{\mathrm{trace}}
\newcommand{\ga}{\gamma}
\newcommand{\om}{\omega}
\newcommand{\Ga}{\Gamma}
\newcommand{\quantsep}{\;{}:{}\;}
\newcommand{\R}{\mathbb{R}} %
\newcommand{\C}{\mathbb{C}} %
\newcommand{\Z}{\mathbb{Z}} %
\newcommand{\h}{\mathbb{H}} %
\newcommand{\calF}{\mathcal{F}}
\newcommand{\ISh}{\Lambda^{\mathrm{Sh}}}
\newcommand{\IM}{\Lambda^{\mathrm{M}}}
\newcommand{\headertitle}{{\normalfont%
  Harmonic weak Maa\ss\ forms of half-integral weight
}}
\newcommand{\headerauthors}{%
  C.~Alfes-Neumann,
  M.~Raum%
}
\title{%
  A classification of harmonic weak\\Maa\ss\ forms of half-integral weight
}
\author{%
Claudia Alfes-Neumann%
\thanks{The first author was supported by the Daimler and Benz Foundation and the Klaus Tschira Boost Fund.}%
\and
Martin Raum%
\thanks{The second author was partially supported by Vetenskapsr\aa det Grant~2015-04139 and~2019-03551.}%
}
\begin{document}

\thispagestyle{scrplain}
\begingroup
\deffootnote[1em]{1.5em}{1em}{\thefootnotemark}
\maketitle
\endgroup

\begin{abstract}
\noindent
{\tbf Abstract:}
We classify Harish-Chandra modules generated by the pullback to the metaplectic group of harmonic weak Maa\ss\ forms with exponential growth allowed at the cusps. This extends work by Schulze-Pillot and parallels recent work by Bringmann--Kudla, who investigated the case of integral weights. We realize each of our cases via a regularized theta lift of an integral weight harmonic weak Maa\ss\ form. Harish-Chandra modules in both integral and half-integral weight that occur need not be irreducible. Therefore, our display of the role that the theta lifting takes in this picture, we hope, contributes to an initial understanding of a theta correspondence for extensions of Harish-Chandra modules.
\\[.3\baselineskip]
\noindent
\textsf{\textbf{%
  MSC Primary:
  11F12%
}}%
\hspace{0.3em}{\tiny$\blacksquare$}\hspace{0.3em}%
\textsf{\textbf{%
  MSC Secondary:
  11F27, 11F70%
}}
\end{abstract}

\Needspace*{4em}
\addcontentsline{toc}{section}{Introduction}
\markright{Introduction}
\lettrine[lines=2,nindent=.2em]{B}{ringmann and Kudla}~\cite{bringmannkudla} provided a classification of the Harish-Chandra modules generated by the pullback to $\SL{2}(\R)$ of harmonic weak Maa\ss\ forms of integral weight with exponential growth allowed at the cusps. They complemented their classification with explicit examples for all~9 possibilities, and thus showed that these arise from harmonic weak Maa\ss\ forms. The case of half-integral weight was partially treated by Schulze-Pillot in earlier work~\cite{schulzepillot}. He restricted himself to a subclass of functions satisfying a more restrictive growth condition at the cusps. However, he did not explicitly realize the different modules that arise.

In this note we provide a classification of Harish-Chandra modules corresponding to the full class of harmonic weak Maa\ss\ forms of half-integral weight. Moreover, we explicitly realize all cases as certain theta liftings of integral weight harmonic weak Maa\ss\ forms that occur in Bringmann and Kudla's work. We therefore extend Schulze-Pillot's work in two directions. Observe that it is equally possible to realize these cases by Poincar\'e series or via an abstract cohomological argument.

Our work also provides a representation theoretic perspective on local theta liftings of harmonic weak Maa\ss\ forms. Theta liftings are an explicit realization of the theta correspondence introduced by Howe~\cite{howe} and are best understood for cusp forms. For example, they can be used to realize the correspondence of Shimura and Shintani in the classical setting of cusp forms~\cite{sh, shintani}, whose representation theoretic counterpart appears in Waldspurger's work~\cite{waldspurger}. To be able to lift forms with singularities at the cusps we consider lifts that are regularized using ideas of Harvey--Moore~\cite{harveymoore} and Borcherds~\cite{borcherds}.

Much less is known on the representation theoretic side. Kudla and Rallis analyzed invariant distributions~\cite{kudlarallis}, which arise from the Shintani lift of constants when viewed through the lens of the archimedean theta correspondence. They encountered reducible Harish-Chandra modules, as opposed to the irreducible ones that one finds when treating cusp forms. The Harish-Chandra modules that occurred in the work of Bringmann--Kudla and Schulze-Pillot and that occur in our work are generally reducible, too. In this sense, we give an initial sense of how the archimedean theta correspondence might function on reducible Harish-Chandra modules.

We illustrate our results by an example. Harish-Chandra modules in our setting can be visualized by their~$K$-type support and transitions, which reflect the behaviour of Maa\ss\ lowering and raising operators on harmonic weak Maa\ss\ forms, defined in~\eqref{eq:def:lowering_raising_operators} and the paragraph that follows it. We have the following two Harish-Chandra modules, one for~$\SL{2}(\RR)$ and the other one for~$\Mp{1}(\RR)$.
\begin{center}
\begin{tikzpicture}
\coordinate (o) at (0pt,60pt); %
\coordinate (s) at (15pt,0pt); %
\coordinate (vh) at (0pt,12pt); %

\coordinate (o) at ($(o)+0.5*(s)$);

\coordinate (lb) at ($(o)+0*(s)$); 
\coordinate (le) at ($(o)+16*(s)$);
\draw[-] (lb) -- (le);

\foreach \ix in {2,4,6}{
  \coordinate (a) at ($(o)+\ix*(s)$);
  \draw (a) circle[radius=1pt];
}

\coordinate (a) at ($(o)+8*(s)$);
\fill (a) circle[radius=2pt];

\foreach \ix in {10,12,14}{
  \coordinate (a) at ($(o)+\ix*(s)$);
  \fill (a) circle[radius=2pt];
}

\coordinate (a) at ($(o)+10*(s)$);
\draw (a) circle[radius=4pt];

\coordinate (a) at ($(o)+8*(s)$);
\coordinate (at) at ($(a)+(vh)$);
\coordinate (ab) at ($(a)-(vh)$);
\draw[dashed] (ab) -- (at) node[above] {$0$};

\coordinate (a) at ($(o)+6*(s)$);
\coordinate (at) at ($(a)+(vh)$);
\coordinate (ab) at ($(a)-(vh)$);
\draw[-,thick] (ab) -- (at);

\coordinate (b) at ($(a)+0.75*(vh)$);
\coordinate (bl) at ($(b)-0.4*(s)$);
\coordinate (br) at ($(b)+0.4*(s)$);
\draw[->] (br) -- (bl);

\coordinate (a) at ($(o)+10*(s)$);
\coordinate (at) at ($(a)+(vh)$);
\coordinate (ab) at ($(a)-(vh)$);
\draw[-,thick] (ab) -- (at) node[above] {$2$};

\coordinate (b) at ($(a)+0.75*(vh)$);
\coordinate (bl) at ($(b)-0.4*(s)$);
\coordinate (br) at ($(b)+0.4*(s)$);
\draw[->] (bl) -- (br);

\coordinate (a) at ($(o)+17*(s)$);
\node[right] at (a)
  {$\rmL_2\, f \ne 0$, $\Delta_2\, f = 0$.};

\coordinate (o) at (0pt,00pt); %
\coordinate (s) at (15pt,0pt); %
\coordinate (vh) at (0pt,12pt); %

\coordinate (lb) at ($(o)+0*(s)$); 
\coordinate (le) at ($(o)+16*(s)$);
\draw[-] (lb) -- (le);

\foreach \ix in {2,4,6}{
  \coordinate (a) at ($(o)+\ix*(s)$);
  \draw (a) circle[radius=1pt];
}

\coordinate (a) at ($(o)+8*(s)$);
\fill (a) circle[radius=2pt];

\foreach \ix in {10,12,14}{
  \coordinate (a) at ($(o)+\ix*(s)$);
  \fill (a) circle[radius=2pt];
}

\coordinate (a) at ($(o)+10*(s)$);
\draw (a) circle[radius=4pt];

\coordinate (a) at ($(o)+8.5*(s)$);
\coordinate (at) at ($(a)+(vh)$);
\coordinate (ab) at ($(a)-(vh)$);
\draw[dashed] (ab) -- (at) node[above] {$0$};

\coordinate (a) at ($(o)+10*(s)$);
\coordinate (at) at ($(a)+(vh)$);
\coordinate (ab) at ($(a)-(vh)$);
\draw[-,thick] (ab) -- (at) node[above] {$\tfrac{3}{2}$};

\coordinate (b) at ($(a)+0.75*(vh)$);
\coordinate (bl) at ($(b)-0.4*(s)$);
\coordinate (br) at ($(b)+0.4*(s)$);
\draw[->] (bl) -- (br);

\coordinate (a) at ($(o)+17*(s)$);
\node[right] at (a)
  {$\rmL_{\frac{3}{2}}\, f \ne 0$, $\Delta_\frac{3}{2}\, f = 0$.};
\end{tikzpicture}%
\end{center}

The first Harish-Chandra module corresponds to case III (b) in \cite{bringmannkudla}. It can be realized by the Eisenstein series
\begin{gather*}
  E_2^\ast(z) = 1 - 24\sum_{n \geq 1}\sigma_{1}(n)e^{2\pi i n z} \,-\, \mfrac{3}{\pi y}
\tx{.}
\end{gather*}
The second one can be realized by taking the regularized Shintani-lift of~$E_2^\ast$ (compare Section \ref{sec:lifts} for the definition). Specifically, its (twisted) Shintani-lift was computed in~\cite{alfesschwagenscheidtshintani}. We let $\Delta$ be a negative fundamental discriminant. We have
\begin{gather*}
  \sqrt{|\Delta|} \ISh_{\Delta}(E_{2}^\ast,\tau) = 12 H(|\Delta|) E_{\frac{3}{2}}^\ast(\tau)
\tx{,}
\end{gather*}
where
\begin{gather*}
  E_{\frac{3}{2}}^\ast(\tau)
=
  \sum_{D \geq 0} H(D) e^{2\pi i D \tau}
+
  \frac{1}{16\pi}
  \sum_{n \in \Z}
  v^{-\frac{1}{2}}
  \beta_{\frac{3}{2}}(4\pi n^{2}v) e^{-2\pi i n^{2}\tau}, \, v=\Im(\tau),
\end{gather*}
with~$H(0) = -\frac{1}{12}$ and~$H(D) = 0$ if~$-D \neq 0$ is not a discriminant, is Zagier's weight-$\frac{3}{2}$ Eisenstein series~\cite{zagiereisenstein}. Here, $\beta_{3/2}(s)= \int_1^\infty e^{-st} t^{-3/2}dt$.

Our work is organised as follows: We first review some necessary background on the metaplectic group and harmonic weak Maa\ss\ forms. In Section \ref{sec:gkmodules} we introduce the principal series and state the classification for the $(\mathfrak{g},K)$ modules corresponding to harmonic weak Maa\ss\ forms of half-integral weight. Then we give a short overview on Millson and Shintani theta liftings of even integral weight harmonic weak Maa\ss\ forms and close with the explicit realization of all of the modules arising from our classification.

\section*{Acknowledgements}
The authors thank Steve Kudla for sharing his ideas and Igor Burban, Jens Funke and Markus Schwagenscheidt for interesting discussions and comments. The second author thanks the Institut Mittag-Leffler, where parts of this work was conducted during the program on Moduli and Algebraic Cycles.

\section{Preliminaries}

\subsection{The metaplectic group}

We define the real metaplectic group as
\begin{gather*}
  \Mp{1}(\RR)
\;:=\;
  \big\{ (g, \om) \in \SL{2}(\CC) \,:\,
  g = \begin{psmatrix} a & b \\ c & d \end{psmatrix},
  \om :\, \HS \ra \CC \tx{\ holomorphic},\;
  \om(\tau)^2 = c \tau + d
  \big\}
\end{gather*}
equipped with the usual group law~$(g, \om) (g',\om') = (g g', \tau \mto \om(g' \tau) \om'(\tau))$. Further, we write~$\pi_{\Mp{1}}$ for the projection from~$\Mp{1}(\RR)$ to~$\SL{2}(\RR)$ that sends~$(g,\om)$ to~$g$. This turns~$\Mp{1}(\RR)$ into a connected double cover of~$\SL{2}(\RR)$.

We let~$K$, $M$, and $N$ be the preimages under~$\pi_{\Mp{1}}$ of~$\SO{2}(\RR)$, the subgroup of diagonal, and the subgroup of upper triangular unipotent matrices. We have a~$KMN$-decomposition of~$\Mp{1}(\RR)$, and the subgroups~$K$, $M$, and~$N$ are uniformized by
\begin{gather*}
  k(\theta)
:=
  \Big(
  \begin{psmatrix} \cos(\theta) & \sin(\theta) \\ -\sin(\theta) & \cos(\theta) \end{psmatrix},\,
  \omega_{k(\theta)}
  \Big)
\tx{,}\quad
  m(a, s)
:=
  \Big( \begin{psmatrix} a & 0 \\ 0 & a^{-1} \end{psmatrix}, \sqrt{a^{-1}}_s \Big)
\tx{,}
\\
  n(b)
:=
  \Big( \begin{psmatrix} 1 & b \\ 0 & 1 \end{psmatrix}, 1 \Big)
\tx{,}\quad\tx{and}\quad
  n(b) k\big(\mfrac{\pi}{2}\big)
\tx{.}
\end{gather*}
In the argument of~$k$, we have~$\theta \in \RR$ and $\om_{k(\theta)} :\, \HS \ra \CC$ is uniquely defined by its value~$\om_{k(\theta)}(i) = \exp(-i \frac{1}{2} \theta)$. To specify the right hand side of~$m(a,s)$, we define the sign function~$\sgn(i a) := i \sgn(a)$ for~$a \in i \RR$. Given~$a \in \RR$, $a > 0$, $s \in \{\pm 1\}$, or~$a \in \RR$, $a < 0$, $s \in \{\pm i\}$, we let~$\sqrt{a^{-1}}_s$ be the square root of~$a^{-1}$ with sign~$s$. The argument of~$n$ is~$b \in \RR$.

\subsection{The Lie algebra of~\texpdf{$\Mp{1}(\RR)$}{Mp_1(R)}}
\label{ssec:lie-algebra}

Since~$\pi_{\Mp{1}}$ is a covering map of Lie groups, the (complexified) Lie algebras of~$\Mp{1}(\RR)$ and~$\SL{2}(\RR)$ are canonically isomorphic. We follow the notation in~\cite{bringmannkudla}, and set
\begin{gather*}
  H
\;:=\;
  i \begin{psmatrix} 0 & -1 \\ 1 & 0  \end{psmatrix}
\tx{,}\quad
  X_+
\;:=\;
  \mfrac{1}{2} \begin{psmatrix} 1 & i \\ i & -1  \end{psmatrix}
\tx{,}\quad
  X_-
\;:=\;
  \mfrac{1}{2} \begin{psmatrix} 1 & -i \\ -i & -1  \end{psmatrix}
\tx{,}
\end{gather*}
which is a basis for~$\Lie(\Mp{1}(\RR))_\CC \cong \Lie(\SL{2}(\RR))_\CC \cong \{ A \in \Mat{2}(\CC) \,:\, \trace(A) = 0 \}$. We write~$\rmU(\Lie(\Mp{1}(\RR))_\CC)$ for the universal enveloping algebra of~$\Lie(\Mp{1}(\RR))_\CC$.

We have the commutator relations
\begin{gather*}
  \big[ X_+, X_- \big]
=
  H
\tx{,}\quad
  \big[ H, X_+ \big]
=
  2 X_+
\tx{,}\quad
  \big[ H, X_- \big]
=
  -2 X_-
\tx{.}
\end{gather*}
They allow us to verify that the Casimir element
\begin{gather}
\label{eq:def:casimir-operator}
  C
\;:=\;
  H^2 + 2 X_+ X_- +  2 X_- X_+
\in
  \rmU\big( \Lie(\Mp{1}(\RR))_\CC \big)
\end{gather}
is central as required. We have~$C = (H-1)^2 + 4 X_+ X_- - 1$ and~$C = (H+1)^2 + 4 X_- X_+ - 1$, which is slightly more convenient for later purposes.

The action of~$X + i Y \in \Lie(\Mp{1}(\RR))_\CC$, $X, Y \in \Lie(\Mp{1}(\RR))$, on smooth complex functions~$\td{f} :\, \Mp{1}(\RR) \ra \CC$ is defined by
\begin{gather*}
  \big( (X + i Y) \td{f} \big) (g)
=
  \partial_{t = 0}\, \td{f} \big(g \exp(t X)\big)
  \,+\,
  i
  \partial_{t = 0}\, \td{f} \big(g \exp(t Y)\big)
\tx{,}
\end{gather*}
where we write~$\partial_{t=0}$ for the value at~$t = 0$ of the derivative with respect to~$t$. We remark that the action of~$\rmU(\Lie(\Mp{1}(\RR))_\CC)$ is linear in a similar way, but it does not arise from composition of differential operators. This becomes relevant, when computing the action of the Casimir element.

\subsection{Harmonic weak Maa\ss\ forms}

The action of~$\SL{2}(\RR)$ on the Poincar\'e upper half plane~$\HS$ extends to the metaplectic group via~$\pi_{\Mp{1}}$:
\begin{gather*}
  \big( \begin{psmatrix} a & b \\ c & d \end{psmatrix},\, \om \big)\tau
:=
  \frac{a \tau + b}{c \tau + d}
\tx{.}
\end{gather*}
We define the slash action of weight~$k \in \frac{1}{2} \ZZ$ on functions~$f :\, \HS \ra \CC$ by
\begin{gather*}
  \big( f \big|_k (\ga, \om) \big)(\tau)
:=
  \om(\tau)^{-2k}\, f(\ga \tau)
\tx{.}
\end{gather*}

An arithmetic type is a finite dimensional, complex representation~$\rho$ of a finite index subgroup~$\Ga \subseteq \Mp{1}(\ZZ)$. We write~$V(\rho)$ for the representation space of~$\rho$. Functions~$f :\, \HS \ra V(\rho)$ admit the following slash actions for~$k \in \frac{1}{2} \ZZ$:
\begin{gather*}
  \big( f \big|_{k,\rho} (\ga, \om) \big)(\tau)
:=
  \rho((\ga,\om))^{-1}\, \big( f \big|_k (\ga, \om) \big)(\tau)
\tx{.}
\end{gather*}

The Weil representation is an arithmetic type that is most relevant in the context of theta lifts. Consider a finite quadratic module~$D = (M,q)$, with corresponding bilinear form~$\langle\,\cdot,\cdot\,\rangle_q$. We let $e(x):=e^{2\pi i x}$. There is a unique representation~$\rho_D$ for which~$V(\rho_D)$ is the free module~$\CC M$ with basis~$M$ and actions
\begin{align*}
  \rho_D \Big( \big(
  \begin{psmatrix}1 & 1 \\ 0 & 1\end{psmatrix}, 1
  \big) \Big)
  m
&{}:=
  e\big( q(m) \big) \, m
\tx{,}\\
  \rho_D \Big( \big(
 \begin{psmatrix}0 & -1 \\ 1 & 0\end{psmatrix}, \sqrt{\tau}
 \big) \Big)
  \, m
&{}:=
  \frac{1}{\sigma(D) \, \sqrt{\# M}}
  \sum_{m' \in M}
  e\big(- \langle m, m'\rangle_q \big)\, m'
\tx{,}
\end{align*}
where
\begin{gather*}
  \sigma(D)
:=
  \frac{1}{\sqrt{\# M}}
  \sum_{m \in M} e\big(-q(m) \big)
\tx{.}
\end{gather*}

Recall that $\tau=u+iv$. We fix the normalization of the Maa\ss\ lowering and raising operators as
\begin{gather}
\label{eq:def:lowering_raising_operators}
  \rmR_k
\;:=\;
  2 i \partial_{\tau} + k v^{-1}
\quad\tx{and}\quad 
  \rmL_k
\;:=\;
  -2 i v^2 \partial_{\ov\tau}
\tx{.}
\end{gather}
Then the weight-$k$ Laplace operator equals~$\Delta_k := - \rmR_{k-2} \rmL_k$.

A harmonic weak Maa\ss\ form of weight~$k \in \frac{1}{2} \ZZ$ and arithmetic type~$\rho$ for~$\Ga \subseteq \Mp{1}(\ZZ)$ is a smooth function~$f :\, \HS \ra V(\rho)$ with~$\Delta_k\, f = 0$ such that
\begin{gather*}
  \forall \ga \in \Ga \quantsep f \big|_{k,\rho}\, \ga = f
\end{gather*}
and for some norm~$\|\,\cdot\,\|$ on~$V(\rho)$
\begin{gather}
\label{eq:exp_growth_condition}
  \exists a \in \RR\,
  \forall \ga \in \Mp{1}(\ZZ) \quantsep
  \big\| \big( f \big|_k\, \ga \big)(\tau) \big\| \ll \exp(a v)
\tx{.}
\end{gather}
Note that we only have non-zero harmonic weak Maa\ss\ forms if~$\rho((-I,i)) = i^{-2k}$.

We denote the space of such forms by $\rmH_k^{\mathrm{mg}} (\Gamma,\rho)$ as in~\cite{bringmannkudla}. Observe that while~$\mathrm{mg}$ stands for moderate growth, the condition imposed in~\eqref{eq:exp_growth_condition} differs from what is called the moderate growth condition in the context of modular forms.

If $(\rho,V)$ is one-dimensional, i.e., given by a character $\chi:\Gamma\to \C^\times$, we write $\rmH_k^{\mathrm{mg}} (\Gamma,\chi)$, or even $\rmH_k^{\mathrm{mg}} (\Gamma)$ if $\chi$ is trivial. This subspace is referred to as the space of scalar-valued harmonic weak Maa\ss\ forms of weight $k$ for $\Gamma$. 

Harmonic weak Maa\ss\ forms are related to classical spaces of modular forms by the~$\xi$-operator. Following Bruinier and Funke \cite{bf2004} we define it by
\begin{gather*}
  \xi_k\,f := 2i v^k\, \ov{ \partial_{\ov\tau} f}
\tx{.}
\end{gather*}
Proceeding as in \cite{bf2004} we see that
\begin{gather*}
  \xi_k :\,
  \rmH_k^{\mathrm{mg}} (\Gamma,\rho)
\lra
  \rmM^!_{2-k} (\Gamma,\ov\rho)
\tx{,}
\end{gather*}
and that $\xi_k$ is surjective. Here, $\rmM^!_{2-k}$ denotes the subspace of weakly holomorphic modular forms (i.e., forms that are holomorphic on $\h$ and have poles of finite order at the cusps). Moreover, $\ov \rho$ is defined by $\ov\rho (\gamma)v= \overline{\rho(\gamma)\ov v}$. Here we clearly need to assume that $V$ is defined over $\R$. 

A natural subspace of $\rmH_k^{\mathrm{mg}} (\Gamma,\ov\rho)$ consists of those functions that map to cusp forms under the $\xi$-operator or alternatively for which there exists a polynomial $P_f(\tau)\in V[q^{-1}]$ such that
\begin{gather*}
  f(\tau)-P_f(\tau) \ll e^{-\varepsilon v}
\end{gather*}
as $v\to\infty$ for some $\varepsilon>0$ (and similarly at the other cusps). We denote the subspace of these forms by $\rmH_k (\Gamma,\rho)$. Its image under the~$\xi$-operator are cusp forms.

We now describe the Fourier expansion of such forms. A scalar-valued harmonic weak Maa\ss\ form of integral weight $k\neq 1$ has a Fourier expansion of the form
\begin{gather}\label{fe}
f(\tau) = f^+(\tau)+f^-(\tau) = \sum_{n\gg-\infty} c_f^+(n)q^n + c_f^-(0)v^{1-k} +\sum_{n \ll \infty} c_f^-(n) W_k(4\pi n v) q^n
\end{gather}
at $\infty$, where $W_k(x)$ is the real-valued incomplete $\Gamma$-function
\[
W_k(x)=\Re\big(\Gamma(1-k,-2x)\big)=\Gamma(1-k,-2x)+\begin{cases} \frac{(-1)^{1-k} \pi}{(k-1)!}& x>0,\\0&x<0,\end{cases}
\]
with $\Gamma(s,x)=\int_x^\infty e^{-t} t^{s-1} dt$. If $k=1$, we have to replace the term $v^{1-k}$ in the non-holomorphic part by $-\log(v)$.

If $f\in \rmH_k(\Gamma)$, then we have $c^-_f(n)=0$ for all $n\geq 0$, and more generally~$c^-_{f |_k \ga}(n) = 0$ for all~$\ga \in \Mp{1}(\ZZ)$ and~$n \ge 0$.

We now let $k \in \ZZ_{\ge 0}$. A further differential operator which establishes relations between harmonic weak Maa\ss\ forms and classical modular forms is
\[
D := \frac{1}{2\pi i } \partial_\tau.
\]
By Bol's identity we have
$
D^{1-k}= (-4\pi)^{k-1} \rmR_k^{1-k}
$
and $D^{1-k}: \rmH_k^{\mathrm{mg}}(\Gamma,\rho)\to \rmM^!_{2-k}(\Gamma,\rho)$. 

For scalar-valued forms we define the \emph{flipped space} by
\[
\rmH_k^{\#} (\Gamma) :=\{ f \in \rmH_k^{\mathrm{mg}}(\Gamma)\,:\, D^{1-k}(f)\in S_{2-k}(\Gamma)\}= \{  f \in \rmH_k^{\mathrm{mg}}(\Gamma)\,:\,  c^+_f(n)=0 \text{ for } n<0\}.
\]
The spaces $\rmH_k(\Gamma)$ and $\rmH_k^{\#}(\Gamma)$ are ``flipped'' by the \emph{flipping operator}
\[
\mathcal{F}_k:=\frac{v^{-k}}{(-k)!} \overline{\rmR_k^{-k}}.
\]
The flipping operator satisfies 
\[
\mathcal{F}_k\circ \mathcal{F}_k (f)=f.
\]
It acts on the Fourier expansion \eqref{fe}
of a harmonic weak Maa\ss\ form $f\in \rmH^{\mathrm{mg}}(\Gamma)$ by
\begin{gather}
\label{eq:feflip}
\begin{aligned}
\mathcal{F}_k(f(\tau))&= -\overline{c_f^-(0)} v^{1-k} -(-k)! \sum_{\substack{n\gg-\infty\\n\neq 0}} \overline{c_f^-(-n)}q^n -\overline{c_f^+(0)} \\
&\quad -\frac{1}{(-k)!}\sum_{\substack{n\ll\infty\\n\neq 0}} \overline{c_f^+(-n)} \Gamma(1-k,-4\pi nv) q^n.
\end{aligned}
\end{gather}

\section{\texpdf{$\gK$-modules}{gK-modules} and harmonic weak Maa\ss\ forms}\label{sec:gkmodules}

Recall that a~$\gK$\nbd module is a simultaneous module for a Lie algebra~$\frakg$ and a compact group~$K$ with~$\Lie(K) \subseteq \frakg$ and suitable compatibility conditions imposed. A Harish-Chandra module is an admissible $\gK$\nbd module, i.e., a~$\gK$\nbd module with finite dimensional~$K$\nbd isotypical components. The latter are referred to as~$K$\nbd types. In this paper, we set~$\frakg = \Lie(\Mp{1}(\RR))$, and~$K = \pi_{\Mp{1}}^{-1}(\SO{2}(\RR))$ has already been defined.

\subsection{Some principal series}
\label{ssec:principal-series}

We start by providing a sufficient supply of $\gK$-mod\-ules by decomposing suitable degenerate principal series. We adapt Section~4 of~\cite{bringmannkudla} to the case of the metaplectic group~$\Mp{1}(\RR)$.

For half-integers~$\epsilon \in \frac{1}{2} \ZZ \slash 2 \ZZ$ and~$\nu \in \CC$, we let~$\rmIsm(\epsilon, \nu)$ be the principal series representation of~$\Mp{1}(\RR)$ on the space of smooth functions with the property
\begin{gather}
\label{eq:def:principal-series-functions}
  \phi\big( n(b) m(a,s) g \big)
=
  s^{2\epsilon} |a|^{v + 1}\, \phi(g)
\tx{.}
\end{gather}
To check that this space is not empty, we only need to see that the intersection of~$M$ and~$N$ is trivial.

We consider the associated~$\gK$\nbd module~$\rmI(\epsilon, \nu)$ of~$K$\nbd finite functions in~$\rmIsm(\epsilon, \nu)$. Given an half-integer~$j$ with~$j \in \epsilon + 2 \ZZ$, we let~$\phi_j \in \rmI(\epsilon, \nu)$ be the unique function that satisfies
\begin{gather*}
  \phi_j(k(\theta))
=
  \exp(i j \theta)
\tx{.}
\end{gather*}
To see that this is well-defined it suffices to check that the defining property for~$\rmI(\epsilon, \nu)$ holds on~$K \cap MN$. For~$\theta \in \pi \ZZ$, we have
\begin{gather*}
  \exp(i j \theta)
=
  \phi_j\Big(
  \begin{psmatrix} \cos(\theta) & 0 \\ 0 & \cos(\theta) \end{psmatrix},\,
  \exp\big( -i \tfrac{1}{2} \theta \big)
  \Big)
=
  \sgn\big( \exp(i \tfrac{1}{2} \theta) \big)^{2 \epsilon}\,
  \phi_j(1)
=
  \exp\big( i \epsilon \theta \big)
\tx{.}
\end{gather*}
This calculation also shows that no other~$\rmK$-types but those corresponding to the functions~$\phi_j$, $j \in \epsilon + 2 \ZZ$ can occur in~$\rmI(\epsilon,\nu)$.

We next want to decompose the~$\rmI(\epsilon, \nu)$. To this end, we first calculate that
\begin{gather}
\label{eq:phij-Xpm}
  X_\pm \,\phi_j
=
  \mfrac{1}{2} \big( \nu + 1 \pm j \big) \phi_{j \pm 2}
\quad\tx{and}\quad
  H\, \phi_j
=
  j \phi
\tx{.}
\end{gather}
Using the defining expression for the Casimir element, we conclude that
\begin{gather*}
  C\, \phi_j
\;=\;
  \big( \nu^2 - 1 \big)\, \phi_j
\tx{.}
\end{gather*}
These calculations can be performed in a neighborhood of the identity of~$\Mp{1}(\RR)$, which allows us to transfer all computations from~$\SL{2}(\RR)$. We have chosen normalizations in such a way that the formulas match the common ones, provided in~\cite{bringmannkudla}.

For later purposes, we record that for~$r \ge 0$, we have
\begin{gather}
\label{eq:phij-Xmp-Xpm}
\begin{aligned}
  X_-^r X_+^r\,\phi_j
&=
  \Big(
  \prod_{s = 0}^{r-1}
  \mfrac{1}{2}
  \big( \nu + 1 - ((j+2r) - 2s) \big)
  \Big)\,
  \Big(
  \prod_{s = 0}^{r-1}
  \mfrac{1}{2}
  \big(\nu + 1 + (j + 2s) \big) 
  \Big)\,
  \phi_j
\tx{.}
\\
  X_+^r X_-^r\,\phi_j
&=
  \Big(
  \prod_{s = 0}^{r-1}
  \mfrac{1}{2}
  \big(\nu + 1 + ((j-2r) + 2s) \big) 
  \Big)\,
  \Big(
  \prod_{s = 0}^{r-1}
  \mfrac{1}{2}
  \big( \nu + 1 - (j - 2s) \big)
  \Big)\,
  \phi_j
\tx{.}
\end{aligned}
\end{gather}

If~$\epsilon \in \ZZ$ and~$\nu \not\in \ZZ$, or if~$\epsilon \in \frac{1}{2} + \ZZ$ and~$\nu \not\in \frac{1}{2} + \ZZ$, then~$\rmI(\epsilon, \nu)$ is irreducible. The case of~$\epsilon \in \ZZ$ and~$\nu \in \ZZ$ has already been explained in previous work~\cite{bringmannkudla}. For the purpose of the present paper, we examine~$\epsilon \in \frac{1}{2} + \ZZ$ and~$\nu \in \frac{1}{2} + \ZZ$. Then there is a unique~$j \in \epsilon + 2\ZZ$ and a unique choice of sign such that~$X_\pm\, \phi_j$ vanishes.

We remark that this differs from the situation of integral~$\epsilon$, in which there are two such pairs of~$j$ and signs except for the very special case of~$\nu = 0$. If~$\epsilon \in \frac{1}{2} + \ZZ$, we distinguish two cases based on the parity of~$\nu - \epsilon$ as opposed to the sign of~$\nu$, which plays a decisive role when~$\epsilon \in \ZZ$.

We will encounter two families of irreducible representations, whose $K$-types are spanned by
\begin{gather}
  \varpi^+(\nu)
\;=\;
  \big[
  \phi_{\nu+1},\, \phi_{\nu+2},\, \phi_{\nu+3},\, \ldots
  \big]
\quad\tx{and}\quad
  \varpi^-(\nu)
\;=\;
  \big[
  \ldots,\, \phi_{-\nu-3},\, \phi_{-\nu-2},\, \phi_{-\nu-1}
  \big]
\tx{.}
\end{gather}
They lie in the discrete series if~$\nu > 0$.

Consider the case that~$\epsilon \in \frac{1}{2} + \ZZ$ and~$\nu \in \epsilon + 2 \ZZ$. Then $-\nu - 1 \in \epsilon + 2 \ZZ$, and we have~$X_+\,\phi_{-\nu-1} = 0$, which implies that~$\phi_{-\nu-1}$ spans the maximal~$K$\nbd type of a subrepresentation of~$I(\epsilon,\nu)$. We have the short exact sequence
\begin{gather*}
  0
\lra
  \varpi^-(\nu)
\lra
  I(\epsilon,\nu) 
\lra
  \varpi^+(-\nu)
\lra
  0
\tx{.}
\end{gather*}

Consider the case that~$\epsilon \in \frac{1}{2} + \ZZ$ and~$\nu + 1 \in \epsilon + 2 \ZZ$. Then we have~$X_-\,\phi_{\nu+1} = 0$, which implies that~$\phi_{\nu+1}$ spans the minimal~$K$\nbd type of a subrepresentation of~$I(\epsilon,\nu)$. We have the short exact sequence
\begin{gather*}
  0
\lra
  \varpi^+(\nu)
\lra
  I(\epsilon,\nu) 
\lra
  \varpi^-(-\nu)
\lra
  0
\tx{.}
\end{gather*}

\subsection{Harish-Chandra modules associated to harmonic weak Maa\ss\ forms}

We consider an arithmetic type~$\rho :\, \Ga \ra \GL{}(V(\rho))$, $\Ga \subset \Mp{1}(\ZZ)$, and let~$\rmA(\Mp{1}(\RR))$ be the space of complex-valued, smooth functions on~$\Mp{1}(\RR)$ that are linear combinations of functions with the property that
\begin{gather}
\label{eq:k-eigenfunction}
  \exists j \in \tfrac{1}{2} \ZZ \,
  \forall \theta \in \RR, (g,\omega) \in \Mp{1}(\RR) \quantsep
  \td{f}\big( (g,\omega) k(\theta) \big)
=
  \td{f}\big (g,\omega) \big) \exp(i j \theta) 
\tx{.}
\end{gather}
We consider~$\rmA(\Mp{1}(\RR), V(\rho)) := \rmA(\Mp{1}(\RR)) \otimes_\CC V(\rho)$, the space of smooth functions on~$\Mp{1}(\RR)$ that take values in~$V(\rho)$ and are linear combinations of functions with the same property~\eqref{eq:k-eigenfunction}. Finally, let~$\rmA(\Mp{1}(\RR), \rho) \subseteq \rmA(\Mp{1}(\RR), V(\rho))$ be the subspace of functions with the additional property that
\begin{gather}
\label{eq:rho-covariant}
  \forall \ga \in \Ga, (g,\omega) \in \Mp{1}(\RR) \quantsep
  \td{f}(\ga (g,\omega))
=
  \rho(\ga) \td{f}\big( (g,\omega) \big)
\tx{.}
\end{gather}

We can associate Harish-Chandra modules, in fact submodules of~$\rmA(\Mp{1}(\RR), \rho)$ to harmonic weak Maa\ss\ forms of half-integral weight and of type~$\rho$. We loosely follow the description of the integral weight case in~\cite{bringmannkudla}.

In the rest of this subsection we fix~$f \in \rmH^{\mathrm{mg}}_k(\Ga,\rho)$, and set
\begin{gather}
\label{eq:def:tdf}
  \td{f}_k\big( (g,\om) \big)
\;:=\;
  \td{f}\big( (g,\om) \big)
\;:=\;
  \omega(i)^{-2k}\, f(g i)
\;=\;
  \big( f \big|_k\,(g,\om) \big)(i)
\tx{.}
\end{gather}
Note that~$\td{f}$ depends on~$k$, but it is customary to suppress this dependence from the notation. As long as~$f$ transforms like a modular form, the weight~$k$ can be recovered from the asymptotic expansion of~$f \circ \ga$ for suitable~$\ga \in \Mp{1}(\ZZ)$.

We see that~$\td{f}$ is a function from~$\Mp{1}(\RR)$ to~$V(\rho)$, and verify that it satisfies~\eqref{eq:k-eigenfunction} for~$j = k$. We calculate the action of~$K$ by right shifts to find that~$\td{f} \in \rmA(\Mp{1}(\RR), V(\rho))$. A similar calculation also shows that~$H \td{f} = k \td{f}$. This merely reflects the fact that~$\td{f} = \td{f}_k$ was constructed via the weight-$k$ slash action in~\eqref{eq:def:tdf}. In particular, it does not use any modular properties of~$f$, let alone the fact that it is harmonic. Finally, inspecting the action of~$\Mp{1}(\ZZ)$ by left shifts then shows that~$\td{f} \in \rmA(\Mp{1}(\RR), \rho)$.

Calculations for~$X_+$ and~$X_-$ are significantly more involved. They yield the same results as in the integral weight case. The lowering and raising operators intertwine with the construction in~\eqref{eq:def:tdf} provided that the weight~$k$ is adjusted:
\begin{gather}
\label{eq:Xpm-RL-intertwining}
  X_+\,\td{f}_k
=
  \big(\wtd{\rmR_k\,f}\big)_{k+2}
\quad\tx{and}\quad
  X_-\,\td{f}_k
=
  \big(\wtd{\rmL_k\,f}\big)_{k-2}
\tx{.}
\end{gather}

Only now we employ the fact that~$f$ is harmonic, i.e., that we have~$\rmR_{k+2}\,\rmL_k\,f = 0$. Recalling that~$C = (H-1)^2 + 4 X_+ X_- - 1$, this corresponds via~\eqref{eq:Xpm-RL-intertwining} to
\begin{gather}
\label{eq:tdf-C-eigenfunction}
  X_+\, X_- \td{f} = 0
\quad\tx{and}\quad
  C\, \td{f} = \big( (k-1)^2 - 1 \big) \td{f}
\tx{.}
\end{gather}

The Poincaré-Birkhoff-Witt property of the generators~$H, X_\pm$ of~$\rmU(\Lie(\Mp{1}(\RR))_\CC)$ in conjunction with~\eqref{eq:tdf-C-eigenfunction} implies that~$\td{f}$ generates a $\gK$-module~$\varpi(f,k) = \varpi_\infty(f,k) \subset \rmA(\Mp{1}(\RR), \rho)$, which is spanned by the functions
\begin{gather*}
  \td{f}_{k + 2r}
\;:=\;
  X_+^r\,\td{f}
\quad\tx{and}\quad
  \td{f}_{k - 2r}
\;:=\;
  X_-^r\,\td{f}
\tx{,}\quad
  r \in \ZZ_{\ge 0}
\tx{.}
\end{gather*}
The commutation relations of~$H$ and~$X_\pm$ then imply that each~$K$-type in~$\varpi(f,k)$ occurs with multiplicity at most once. In particular, $\varpi(f,k)$ is a Harish-Chandra module.

We finish with the eigenvalues of~$\td{f}$ under~$X_-^r X_+^r$ and the eigenvalues of~$\td{f}_{k-2}$ under~$X_+^r X_-^r$. The next lemma will be helpful when identifying~$\varpi(f,k)$ in the context of our classification. It features the Pochhammer symbols
\begin{gather*}
  (k)_r
:=
  \lim_{s \ra 0} \Ga(k+r+s) \big\slash \Ga(k+s)
=
  k \cdot (k+1) \cdots (k+r-1)
\tx.
\end{gather*}

\begin{lemma}
\label{la:tdf-XmpXpm-eigenvalues}
Fix~$k \in \frac{1}{2} + \ZZ$ and let~$f :\, \HS \ra V$ be a smooth function with~$\Delta_k\, f = 0$ for some complex vector space~$V$. Then for~$\td{f}$ defined in~\eqref{eq:def:tdf}, we have
\begin{gather}
\label{eq:tdf-XmpXpm-eigenvalues}
  X_-^r X_+^r\, \td{f}_k
=
  (-1)^r r!\, (k)_r\, \td{f}_k
\quad\tx{and}\quad
  X_+^r X_-^r\, \td{f}_{k-2}
=
  r!\, (k-1-r)_r\, \td{f}_{k-2}
\tx{.}
\end{gather}
\end{lemma}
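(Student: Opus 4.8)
The plan is to reduce both identities to telescoping products by repeatedly applying the Casimir operator, exploiting the two convenient rewritings $C = (H-1)^2 + 4X_+X_- - 1$ and $C = (H+1)^2 + 4X_-X_+ - 1$ recorded after~\eqref{eq:def:casimir-operator}. The only facts about $\td f$ I need are that it is an $H$-eigenfunction with $H\td f = k\td f$, that harmonicity gives $X_+X_-\td f = 0$, and that consequently $\td f$ is a $C$-eigenfunction with $C\td f = ((k-1)^2-1)\td f$; these are exactly the relations collected in~\eqref{eq:tdf-C-eigenfunction}. Crucially, $C$ is central, so each $X_+^m\td f = \td f_{k+2m}$ and each $X_-^m\td f = \td f_{k-2m}$ remains a $C$-eigenfunction with the same eigenvalue $(k-1)^2-1$, while its $H$-eigenvalue is read off from the commutation relation $[H,X_\pm]=\pm 2X_\pm$.

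For the first identity I would peel off one raising operator at a time. Rearranging the second form of the Casimir gives the operator identity $4X_-X_+ = C - (H+1)^2 + 1$. Applying it to $X_+^{m-1}\td f_k$, which has $H$-eigenvalue $k+2(m-1)$ and $C$-eigenvalue $(k-1)^2-1$, yields $X_-X_+^m\td f_k = \tfrac14\big((k-1)^2 - (k+2m-1)^2\big)X_+^{m-1}\td f_k$; factoring the difference of squares as $(-2m)\,(2k+2m-2)$ this is $-m(k+m-1)\,X_+^{m-1}\td f_k$. Iterating for $m=r,r-1,\dots,1$ turns $X_-^rX_+^r\td f_k$ into the product $\prod_{m=1}^{r}\big(-m(k+m-1)\big)\,\td f_k = (-1)^r r!\,(k)_r\,\td f_k$, which is the first claim.

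The second identity is entirely parallel but uses the other rewriting. Writing $u = \td f_{k-2} = X_-\td f$, which has $H$-eigenvalue $k-2$ and the same $C$-eigenvalue, the identity $4X_+X_- = C - (H-1)^2 + 1$ applied to $X_-^{m-1}u$ (of $H$-eigenvalue $k-2m$) gives $X_+X_-^m u = \tfrac14\big((k-1)^2 - (k-2m-1)^2\big)X_-^{m-1}u = m(k-m-1)\,X_-^{m-1}u$; iterating produces $\prod_{m=1}^{r} m(k-m-1)\,u = r!\prod_{m=1}^r (k-1-m)\,u$. The only genuine care needed is the final bookkeeping: recognizing $\prod_{m=1}^{r}(k-1-m) = (k-1-r)(k-r)\cdots(k-2) = (k-1-r)_r$, so that the product equals $r!\,(k-1-r)_r$. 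The main obstacle is thus not conceptual but purely combinatorial---getting the signs in the difference-of-squares factorization and the index ranges in the telescoping products exactly right so that they assemble into the stated Pochhammer symbols. As a consistency check, substituting $\nu = k-1$ with $j=k$ (respectively $j=k-2$) into the principal-series formula~\eqref{eq:phij-Xmp-Xpm} reproduces both eigenvalues, reflecting that the composite operators $X_\mp^r X_\pm^r$ act by a scalar determined by the $H$- and $C$-eigenvalues alone.
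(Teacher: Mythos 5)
Your proof is correct and takes essentially the same route as the paper's: both extract the scalar action of $X_-X_+$ and $X_+X_-$ on each $K$-type from the rewritings $C=(H\pm 1)^2+4X_\mp X_\pm-1$, using centrality of the Casimir together with the relations in~\eqref{eq:tdf-C-eigenfunction}, and then telescope (the paper phrases this as a recursion plus induction on~$r$). Your peeled-off factors $-m(k+m-1)$ and $m(k-m-1)$ match the paper's $-4(r+1)(k+r)$ and $4r(k-1-r)$ exactly, and your closing consistency check against~\eqref{eq:phij-Xmp-Xpm} with $\nu=k-1$ is a sound addition.
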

\begin{proof}
Since the Casimir element is central it acts by scalars on the module generated by~$\td{f}$. We have~$C \td{f}_{k + 2r} = ((k-1)^2 - 1) \td{f}_{k + 2r}$ for all~$r \in \ZZ$. The action of~$H$ was determined before: $H\, \td{f}_{k+2r} = (k + 2r) \td{f}_{k+2r}$.

We conclude that for~$r \ge 0$, we have
\begin{gather*}
  4 X_- X_+\, \td{f}_{k+2r}
=
  \big( (k-1)^2 - 1 - (k + 2r + 1)^2 + 1 \big)\, \td{f}_{k+2r}
=
  -4 (r+1) (k+r)\, \td{f}_{k+2r}
\tx{.}
\end{gather*}
Similarly, if~$r \ge 0$, we have
\begin{gather*}
  4 X_+ X_-\, \td{f}_{k-2r}
=
  \big( (k-1)^2 - 1 - (k - 2r - 1)^2 + 1 \big)\, \td{f}_{k-2r}
=
  4 r (k - 1 - r)\, \td{f}_{k-2r}
\tx{.}
\end{gather*}
This yields the recursions, valid for~$r > 0$,
\begin{alignat*}{2}
  X_-^r X_+^r\, \td{f}_k
&{}=
  X_-^{r-1}\, X_- X_+\, \td{f}_{k + 2r-2}
\\
&{}=
  -r (k+r-1)\, X_-^{r-1}\, \td{f}_{k + 2r-2}
&&{}=
  -r (k+r-1)\, X_-^{r-1} X_+^{r-1}\, \td{f}_k
\tx{,}
\\
  X_+^r X_-^r\, \td{f}_{k-2}
&{}=
  X_-^{r-1}\, X_+ X_-\, \td{f}_{k-2r}
\\
&{}=
  r (k-1 - r)\, X_+^{r-1}\, \td{f}_{k-2r}
&&{}=
  r (k-1 - r)\, X_+^{r-1} X_-^{r-1}\, \td{f}_{k-2}
\tx{.}
\end{alignat*}
The statement follows from these recursions by induction on~$r$.
\end{proof}

\subsection{Classification}

We next describe the Harish-Chandra modules~$\varpi(f,k)$ associated with harmonic weak Maa\ss\ forms in terms of the standard modules~$\varpi^\pm(\pm\nu)$. The theory is much more stringent than in the case of integral weights.

\begin{proposition}
Let~$f$ be a weakly holomorphic modular form of weight~$k \in \frac{1}{2} + \ZZ$. Then~$\varpi(f,k)$ is isomorphic to~$\varpi^+(k-1)$.

Let~$f$ be a harmonic weak Maa\ss\ form of weight~$k \in \frac{1}{2} + \ZZ$ that is not weakly holomorphic. Then~$\varpi(f,k)$ fits into the nonsplit exact sequence
\begin{gather*}
  0
\lra
  \varpi^-(1-k)
\lra
  \varpi(f,k)
\lra
  \varpi^+(k-1)
\lra
  0
\tx{.}
\end{gather*}
\end{proposition}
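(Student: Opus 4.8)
The plan is to determine the structure of $\varpi(f,k)$ entirely from its $K$-type support together with the single relation imposed by harmonicity, and then to match the resulting picture against the standard modules $\varpi^\pm$ of Section~\ref{ssec:principal-series}. Throughout I set $\nu = k-1$, so that by~\eqref{eq:tdf-C-eigenfunction} the Casimir eigenvalue is $\nu^2-1$, and I exploit the decisive rigidity of the half-integral case: for $k \in \frac12 + \ZZ$ every factor occurring in a Pochhammer symbol $(k)_r$ or $(k-1-r)_r$ is a nonzero half-integer, so these symbols never vanish. This is precisely what collapses the many integral-weight cases into the single pattern asserted here.

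First I treat the weakly holomorphic case. Here $\xi_k\,f = 0$, hence $\rmL_k\,f = 0$, and the intertwining relation~\eqref{eq:Xpm-RL-intertwining} gives $X_-\,\td f_k = 0$. Thus $\varpi(f,k)$ is spanned by the raising images $\td f_{k+2r} = X_+^r\,\td f$, $r \ge 0$, and Lemma~\ref{la:tdf-XmpXpm-eigenvalues} yields $X_-^r X_+^r\,\td f_k = (-1)^r r!\,(k)_r\,\td f_k \ne 0$, so each $\td f_{k+2r}$ is nonzero. Hence $\varpi(f,k)$ is a lowest-weight module whose $K$-types are exactly the weights $k, k+2, k+4, \dots$, each of multiplicity one, with lowest vector $\td f_k$ annihilated by $X_-$, of $H$-eigenvalue $k = \nu+1$, and Casimir eigenvalue $\nu^2-1$. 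These are the defining data of $\varpi^+(k-1)$, whose lowest vector $\phi_{\nu+1}$ satisfies $X_-\,\phi_{\nu+1} = 0$ by~\eqref{eq:phij-Xpm}. I would send $\td f_k \mapsto \phi_{\nu+1}$ and extend by the $\frakg$-action; the resulting $\gK$-homomorphism is surjective because $\td f_k$ generates, and injective because $\varpi^+(k-1)$ is irreducible.

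Next, the harmonic but not weakly holomorphic case. Now $\rmL_k\,f \ne 0$, so $X_-\,\td f_k = (\wtd{\rmL_k\,f})_{k-2} \ne 0$; write $\td f_{k-2} = X_-\,\td f$. Harmonicity gives $X_+ X_-\,\td f = 0$ by~\eqref{eq:tdf-C-eigenfunction}, i.e.\ $X_+\,\td f_{k-2} = 0$. Applying both formulas of Lemma~\ref{la:tdf-XmpXpm-eigenvalues} and again invoking the nonvanishing of the half-integral Pochhammer symbols, all of $\td f_{k-2}, \td f_{k-4}, \dots$ and all of $\td f_k, \td f_{k+2}, \dots$ are nonzero, so $\varpi(f,k)$ carries every weight in $k + 2\ZZ$ as a multiplicity-one $K$-type. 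The vector $\td f_{k-2}$ is killed by $X_+$, has weight $k-2 = -(1-k)-1$, and Casimir eigenvalue $\nu^2-1 = (1-k)^2-1$; it therefore generates a submodule $W$ with $K$-types $k-2, k-4, \dots$, and matching it against the highest vector of $\varpi^-(1-k)$ (killed by $X_+$ via~\eqref{eq:phij-Xpm}) exactly as above gives $W \cong \varpi^-(1-k)$. In the quotient $\varpi(f,k)/W$ the image of $\td f_k$ is annihilated by $X_-$, since $X_-\,\td f_k = \td f_{k-2} \in W$, and keeps weight $k$ and Casimir eigenvalue $\nu^2-1$, so the quotient is a lowest-weight module isomorphic to $\varpi^+(k-1)$. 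This produces the short exact sequence.

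The one genuinely non-formal point, and the step I expect to be the main obstacle, is nonsplitting. A splitting would embed $\varpi^+(k-1)$ as a submodule of $\varpi(f,k)$, and the lowest $K$-type of that copy would sit in weight $k$; but the weight-$k$ space is one-dimensional, spanned by $\td f_k$, so the embedded lowest vector would be a nonzero multiple of $\td f_k$ annihilated by $X_-$. This contradicts $X_-\,\td f_k = \td f_{k-2} \ne 0$, which holds precisely because $f$ is not weakly holomorphic. Hence the extension is nonsplit. The remaining care is only in the well-definedness and injectivity of the intertwiners identifying the sub- and quotient modules with the standard $\varpi^\pm$, and these rest on the irreducibility recorded in Section~\ref{ssec:principal-series} together with the multiplicity-one $K$-type structure established above.
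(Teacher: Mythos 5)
Your proposal is correct and takes essentially the same route as the paper: both rest on the multiplicity-one $K$-type structure, the eigenvalue formulas of Lemma~\ref{la:tdf-XmpXpm-eigenvalues} matched against~\eqref{eq:phij-Xmp-Xpm} for the principal series, and the nonvanishing of the half-integral Pochhammer symbols to pin down the submodule generated by $\td{f}_{k-2} = X_-\,\td{f}$ and the quotient generated by the image of $\td{f}_k$. Your explicit nonsplitting argument via $X_-\,\td{f}_k \ne 0$ on the one-dimensional weight-$k$ space is a point the paper leaves implicit, and it is exactly the right justification.
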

\begin{remark}
The existence of weakly holomorphic modular forms in all half-integral weight cases is clear. The existence of harmonic weak Maa\ss\ forms in all weights follows along the lines of Bruinier-Funke~\cite{bf2004}, when removing the condition that the image under~$\xi_k$ has moderate growth.
\end{remark}
\begin{proof}
One can appeal to a classification that identifies irreducible Harish-Chandra modules in terms of their eigenvalues under the Casimir element and their $K$\nbd types support.

A more elementary approach was suggested by Bringmann-Kudla~ \cite{bringmannkudla}. Since all $K$\nbd types in~$\varpi(f,k)$ appear with multiplicity at most one, it suffices to compare the action of~$X_\pm^r X_\mp^r$ and~$X_\mp^r X_\pm^r$. We give the details in the case that~$f$ is not weakly holomorphic.

Observe that~$g := \rmL_k\,f$ is annihilated by~$\rmR_{k-2}$. Using the intertwining property~\eqref{eq:Xpm-RL-intertwining} of~$\rmR_{k-2}$ and~$X_+$, we conclude that~$X_+\, \td{g}_{k-2} = 0$. To show that the Harish-Chandra-module~$\varpi(g, k-2)$ is isomorphic to~$\varpi^-(1-k)$, it now suffices to calculate and compare the eigenvalues of~$\td{g} = \td{f}_{-1}$ and of~$\phi_{k-2}$ from Section~\ref{ssec:principal-series} under~$X_+^r X_-^r$ for all positive integers~$r$. The former was given in~\eqref{eq:tdf-XmpXpm-eigenvalues} and the latter in~\eqref{eq:phij-Xmp-Xpm}, where we have~$\nu = k-2$ and~$j = k-2$.

The $K$\nbd type support of the quotient module~$\varpi(f,k) \slash \varpi(g,k-2)$ corresponds to the $k(\theta)$-eigenvalues~$e(i j \theta)$ for~$j \in k + 2 \ZZ_{\ge 0}$, which coincides with the $K$\nbd types that appear in~$\varpi^+(k-1)$. Again because each~$K$\nbd type occurs with multiplicity one, it suffices to compare the eigenvalues of~$\td{f}$ and~$\phi_k$ under~$X_-^r X_+^r$. The former was also given in~\eqref{eq:tdf-XmpXpm-eigenvalues} and the latter in~\eqref{eq:phij-Xmp-Xpm}, where we have~$\nu = k-2$ and~$j = k$.
\end{proof}

\subsection{Diagrams of $K$-types}\label{subsec:diagrams}

Let~$f$ be a harmonic weak Maa\ss\ form of weight~$k$. If we have~$\rmL_k\, f = 0$ and $k \in \frac{1}{2} + 2\ZZ$ is greater than one, the associated Harish-Chandra module~$\varpi(f,k)$ yields the~$K$\nbd type diagram
\begin{center}
\begin{tikzpicture}
\coordinate (o) at (0pt,0pt); %
\coordinate (s) at (10pt,0pt); %
\coordinate (vh) at (0pt,12pt); %

\coordinate (lb) at ($(o)-2*(s)$); 
\coordinate (le) at ($(o)+18*(s)$);
\draw[-] (lb) -- (le);

\foreach \ix in {0,2}{
  \coordinate (a) at ($(o)+\ix*(s)$);
  \draw (a) circle[radius=1pt];
}

\foreach \ix in {4,6,8,10}{
  \coordinate (a) at ($(o)+\ix*(s)$);
  \draw (a) circle[radius=1pt];
}

\foreach \ix in {12,14,16}{
  \coordinate (a) at ($(o)+\ix*(s)$);
  \fill (a) circle[radius=2pt];
}

\coordinate (a) at ($(o)+12*(s)$);
\draw (a) circle[radius=4pt];

\coordinate (a) at ($(o)+3.5*(s)$);
\coordinate (at) at ($(a)+(vh)$);
\coordinate (ab) at ($(a)-(vh)$);
\draw[dashed] (ab) -- (at) node[above] {$0$};

\coordinate (a) at ($(o)+4.5*(s)$);
\coordinate (at) at ($(a)+(vh)$);
\coordinate (ab) at ($(a)-(vh)$);
\draw[dashed] (ab) -- (at) node[above] {$1$};

\coordinate (a) at ($(o)+12*(s)$);
\coordinate (at) at ($(a)+(vh)$);
\coordinate (ab) at ($(a)-(vh)$);
\draw[-,thick] (ab) -- (at) node[above] {$k$};

\coordinate (b) at ($(a)+0.75*(vh)$);
\coordinate (bl) at ($(b)-0.4*(s)$);
\coordinate (br) at ($(b)+0.4*(s)$);
\draw[->] (bl) -- (br);

\coordinate (a) at ($22*(s)$);
\node[right] at (a)
  {$\rmL_k\, f = 0\tx{,}\, k > 1\tx{,}\, k \in \frac{1}{2} + 2\ZZ\tx{.}$};
\end{tikzpicture}%
\end{center}
Observe that the $K$\nbd types next to~$0$ in this diagram are labelled by~$-\frac{3}{2}$ and~$\frac{1}{2}$. Integral weights do not support~$K$\nbd types in this diagram. If we have~$\rmL_k\, f = 0$ and $k \in \frac{3}{2} + 2\ZZ$ is greater than one, then the positively labelled $K$-type next to zero is~$\frac{3}{2}$ and the negative one is~$-\frac{1}{2}$. This yields the diagram
\begin{center}
\begin{tikzpicture}
\coordinate (o) at (0pt,0pt); %
\coordinate (s) at (10pt,0pt); %
\coordinate (vh) at (0pt,12pt); %

\coordinate (lb) at ($(o)-2*(s)$); 
\coordinate (le) at ($(o)+18*(s)$);
\draw[-] (lb) -- (le);

\foreach \ix in {0,2}{
  \coordinate (a) at ($(o)+\ix*(s)$);
  \draw (a) circle[radius=1pt];
}

\foreach \ix in {4,6,8,10}{
  \coordinate (a) at ($(o)+\ix*(s)$);
  \draw (a) circle[radius=1pt];
}

\foreach \ix in {12,14,16}{
  \coordinate (a) at ($(o)+\ix*(s)$);
  \fill (a) circle[radius=2pt];
}

\coordinate (a) at ($(o)+12*(s)$);
\draw (a) circle[radius=4pt];

\coordinate (a) at ($(o)+2.5*(s)$);
\coordinate (at) at ($(a)+(vh)$);
\coordinate (ab) at ($(a)-(vh)$);
\draw[dashed] (ab) -- (at) node[above] {$0$};

\coordinate (a) at ($(o)+3.5*(s)$);
\coordinate (at) at ($(a)+(vh)$);
\coordinate (ab) at ($(a)-(vh)$);
\draw[dashed] (ab) -- (at) node[above] {$1$};

\coordinate (a) at ($(o)+12*(s)$);
\coordinate (at) at ($(a)+(vh)$);
\coordinate (ab) at ($(a)-(vh)$);
\draw[-,thick] (ab) -- (at) node[above] {$k$};

\coordinate (b) at ($(a)+0.75*(vh)$);
\coordinate (bl) at ($(b)-0.4*(s)$);
\coordinate (br) at ($(b)+0.4*(s)$);
\draw[->] (bl) -- (br);

\coordinate (a) at ($22*(s)$);
\node[right] at (a)
  {$\rmL_k\, f = 0\tx{,}\, k > 1\tx{,}\, k \in \frac{3}{2} + 2\ZZ\tx{.}$};
\end{tikzpicture}%
\end{center}

If~$\rmL_k\, f \ne 0$, we again have two cases. One of the following two diagrams describes the~$K$\nbd types in~$\varpi(f,k)$:
\begin{center}
\begin{tikzpicture}
\coordinate (o) at (0pt,0pt); %
\coordinate (s) at (10pt,0pt); %
\coordinate (vh) at (0pt,12pt); %

\coordinate (lb) at ($(o)-2*(s)$); 
\coordinate (le) at ($(o)+18*(s)$);
\draw[-] (lb) -- (le);

\foreach \ix in {0,2}{
  \coordinate (a) at ($(o)+\ix*(s)$);
  \fill (a) circle[radius=2pt];
}

\foreach \ix in {4,6,8,10}{
  \coordinate (a) at ($(o)+\ix*(s)$);
  \fill (a) circle[radius=2pt];
}

\foreach \ix in {12,14,16}{
  \coordinate (a) at ($(o)+\ix*(s)$);
  \fill (a) circle[radius=2pt];
}

\coordinate (a) at ($(o)+12*(s)$);
\draw (a) circle[radius=4pt];

\coordinate (a) at ($(o)+3.5*(s)$);
\coordinate (at) at ($(a)+(vh)$);
\coordinate (ab) at ($(a)-(vh)$);
\draw[dashed] (ab) -- (at) node[above] {$0$};

\coordinate (a) at ($(o)+4.5*(s)$);
\coordinate (at) at ($(a)+(vh)$);
\coordinate (ab) at ($(a)-(vh)$);
\draw[dashed] (ab) -- (at) node[above] {$1$};

\coordinate (a) at ($(o)+12*(s)$);
\coordinate (at) at ($(a)+(vh)$);
\coordinate (ab) at ($(a)-(vh)$);
\draw[-,thick] (ab) -- (at) node[above] {$k$};

\coordinate (b) at ($(a)+0.75*(vh)$);
\coordinate (bl) at ($(b)-0.4*(s)$);
\coordinate (br) at ($(b)+0.4*(s)$);
\draw[->] (bl) -- (br);

\coordinate (a) at ($22*(s)$);
\node[right] at (a)
  {$\rmL_k\, f \ne 0\tx{,}\, k > 1\tx{,}\, k \in \frac{1}{2} + 2\ZZ\tx{.}$};
\end{tikzpicture}%

\begin{tikzpicture}
\coordinate (o) at (0pt,0pt); %
\coordinate (s) at (10pt,0pt); %
\coordinate (vh) at (0pt,12pt); %

\coordinate (lb) at ($(o)-2*(s)$); 
\coordinate (le) at ($(o)+18*(s)$);
\draw[-] (lb) -- (le);

\foreach \ix in {0,2}{
  \coordinate (a) at ($(o)+\ix*(s)$);
  \fill (a) circle[radius=2pt];
}

\foreach \ix in {4,6,8,10}{
  \coordinate (a) at ($(o)+\ix*(s)$);
  \filldraw (a) circle[radius=2pt];
}

\foreach \ix in {12,14,16}{
  \coordinate (a) at ($(o)+\ix*(s)$);
  \fill (a) circle[radius=2pt];
}

\coordinate (a) at ($(o)+12*(s)$);
\draw (a) circle[radius=4pt];

\coordinate (a) at ($(o)+2.5*(s)$);
\coordinate (at) at ($(a)+(vh)$);
\coordinate (ab) at ($(a)-(vh)$);
\draw[dashed] (ab) -- (at) node[above] {$0$};

\coordinate (a) at ($(o)+3.5*(s)$);
\coordinate (at) at ($(a)+(vh)$);
\coordinate (ab) at ($(a)-(vh)$);
\draw[dashed] (ab) -- (at) node[above] {$1$};

\coordinate (a) at ($(o)+12*(s)$);
\coordinate (at) at ($(a)+(vh)$);
\coordinate (ab) at ($(a)-(vh)$);
\draw[-,thick] (ab) -- (at) node[above] {$k$};

\coordinate (b) at ($(a)+0.75*(vh)$);
\coordinate (bl) at ($(b)-0.4*(s)$);
\coordinate (br) at ($(b)+0.4*(s)$);
\draw[->] (bl) -- (br);

\coordinate (a) at ($22*(s)$);
\node[right] at (a)
  {$\rmL_k\, f \ne 0\tx{,}\, k > 1\tx{,}\, k \in \frac{3}{2} + 2\ZZ\tx{.}$};
\end{tikzpicture}%
\end{center}

The situation is very similar for~$k$ less than~$1$. Depending on~$\rmL_k\,f$ and~$k$, one of the following four diagrams arises from~$\varpi(f,k)$:
\begin{center}
\begin{tikzpicture}
\coordinate (o) at (0pt,0pt); %
\coordinate (s) at (10pt,0pt); %
\coordinate (vh) at (0pt,12pt); %

\coordinate (lb) at ($(o)-2*(s)$); 
\coordinate (le) at ($(o)+18*(s)$);
\draw[-] (lb) -- (le);

\foreach \ix in {0,2}{
  \coordinate (a) at ($(o)+\ix*(s)$);
  \draw (a) circle[radius=1pt];
}

\foreach \ix in {4,6,8,10,12}{
  \coordinate (a) at ($(o)+\ix*(s)$);
  \fill (a) circle[radius=2pt];
}

\foreach \ix in {14,16}{
  \coordinate (a) at ($(o)+\ix*(s)$);
  \fill (a) circle[radius=2pt];
}

\coordinate (a) at ($(o)+4*(s)$);
\draw (a) circle[radius=4pt];

\coordinate (a) at ($(o)+4*(s)$);
\coordinate (at) at ($(a)+(vh)$);
\coordinate (ab) at ($(a)-(vh)$);
\draw[-,thick] (ab) -- (at) node[above] {$k$};

\coordinate (b) at ($(a)+0.75*(vh)$);
\coordinate (bl) at ($(b)-0.4*(s)$);
\coordinate (br) at ($(b)+0.4*(s)$);
\draw[->] (bl) -- (br);

\coordinate (a) at ($(o)+13.5*(s)$);
\coordinate (at) at ($(a)+(vh)$);
\coordinate (ab) at ($(a)-(vh)$);
\draw[dashed] (ab) -- (at) node[above] {$0$};

\coordinate (a) at ($(o)+14.5*(s)$);
\coordinate (at) at ($(a)+(vh)$);
\coordinate (ab) at ($(a)-(vh)$);
\draw[dashed] (ab) -- (at) node[above] {$1$};

\coordinate (a) at ($22*(s)$);
\node[right] at (a)
  {$\rmL_k\, f = 0\tx{,}\, k < 1\tx{,}\, k \in \frac{1}{2} + 2\ZZ\tx{.}$};
\end{tikzpicture}%

\begin{tikzpicture}
\coordinate (o) at (0pt,0pt); %
\coordinate (s) at (10pt,0pt); %
\coordinate (vh) at (0pt,12pt); %

\coordinate (lb) at ($(o)-2*(s)$); 
\coordinate (le) at ($(o)+18*(s)$);
\draw[-] (lb) -- (le);

\foreach \ix in {0,2}{
  \coordinate (a) at ($(o)+\ix*(s)$);
  \draw (a) circle[radius=1pt];
}

\foreach \ix in {4,6,8,10,12}{
  \coordinate (a) at ($(o)+\ix*(s)$);
  \fill (a) circle[radius=2pt];
}

\foreach \ix in {14,16}{
  \coordinate (a) at ($(o)+\ix*(s)$);
  \fill (a) circle[radius=2pt];
}

\coordinate (a) at ($(o)+4*(s)$);
\draw (a) circle[radius=4pt];

\coordinate (a) at ($(o)+4*(s)$);
\coordinate (at) at ($(a)+(vh)$);
\coordinate (ab) at ($(a)-(vh)$);
\draw[-,thick] (ab) -- (at) node[above] {$k$};

\coordinate (b) at ($(a)+0.75*(vh)$);
\coordinate (bl) at ($(b)-0.4*(s)$);
\coordinate (br) at ($(b)+0.4*(s)$);
\draw[->] (bl) -- (br);

\coordinate (a) at ($(o)+12.5*(s)$);
\coordinate (at) at ($(a)+(vh)$);
\coordinate (ab) at ($(a)-(vh)$);
\draw[dashed] (ab) -- (at) node[above] {$0$};

\coordinate (a) at ($(o)+13.5*(s)$);
\coordinate (at) at ($(a)+(vh)$);
\coordinate (ab) at ($(a)-(vh)$);
\draw[dashed] (ab) -- (at) node[above] {$1$};

\coordinate (a) at ($22*(s)$);
\node[right] at (a)
  {$\rmL_k\, f = 0\tx{,}\, k < 1\tx{,}\, k \in \frac{3}{2} + 2\ZZ\tx{.}$};
\end{tikzpicture}%

\begin{tikzpicture}
\coordinate (o) at (0pt,0pt); %
\coordinate (s) at (10pt,0pt); %
\coordinate (vh) at (0pt,12pt); %

\coordinate (lb) at ($(o)-2*(s)$); 
\coordinate (le) at ($(o)+18*(s)$);
\draw[-] (lb) -- (le);

\foreach \ix in {0,2}{
  \coordinate (a) at ($(o)+\ix*(s)$);
  \fill (a) circle[radius=2pt];
}

\foreach \ix in {4,6,8,10,12}{
  \coordinate (a) at ($(o)+\ix*(s)$);
  \fill (a) circle[radius=2pt];
}

\foreach \ix in {14,16}{
  \coordinate (a) at ($(o)+\ix*(s)$);
  \fill (a) circle[radius=2pt];
}

\coordinate (a) at ($(o)+4*(s)$);
\draw (a) circle[radius=4pt];

\coordinate (a) at ($(o)+4*(s)$);
\coordinate (at) at ($(a)+(vh)$);
\coordinate (ab) at ($(a)-(vh)$);
\draw[-,thick] (ab) -- (at) node[above] {$k$};

\coordinate (b) at ($(a)+0.75*(vh)$);
\coordinate (bl) at ($(b)-0.4*(s)$);
\coordinate (br) at ($(b)+0.4*(s)$);
\draw[->] (bl) -- (br);

\coordinate (a) at ($(o)+13.5*(s)$);
\coordinate (at) at ($(a)+(vh)$);
\coordinate (ab) at ($(a)-(vh)$);
\draw[dashed] (ab) -- (at) node[above] {$0$};

\coordinate (a) at ($(o)+14.5*(s)$);
\coordinate (at) at ($(a)+(vh)$);
\coordinate (ab) at ($(a)-(vh)$);
\draw[dashed] (ab) -- (at) node[above] {$1$};

\coordinate (a) at ($22*(s)$);
\node[right] at (a)
  {$\rmL_k\, f \ne 0\tx{,}\, k < 1\tx{,}\, k \in \frac{1}{2} + 2\ZZ\tx{.}$};
\end{tikzpicture}%

\begin{tikzpicture}
\coordinate (o) at (0pt,0pt); %
\coordinate (s) at (10pt,0pt); %
\coordinate (vh) at (0pt,12pt); %

\coordinate (lb) at ($(o)-2*(s)$); 
\coordinate (le) at ($(o)+18*(s)$);
\draw[-] (lb) -- (le);

\foreach \ix in {0,2}{
  \coordinate (a) at ($(o)+\ix*(s)$);
  \fill (a) circle[radius=2pt];
}

\foreach \ix in {4,6,8,10,12}{
  \coordinate (a) at ($(o)+\ix*(s)$);
  \fill (a) circle[radius=2pt];
}

\foreach \ix in {14,16}{
  \coordinate (a) at ($(o)+\ix*(s)$);
  \fill (a) circle[radius=2pt];
}

\coordinate (a) at ($(o)+4*(s)$);
\draw (a) circle[radius=4pt];

\coordinate (a) at ($(o)+4*(s)$);
\coordinate (at) at ($(a)+(vh)$);
\coordinate (ab) at ($(a)-(vh)$);
\draw[-,thick] (ab) -- (at) node[above] {$k$};

\coordinate (b) at ($(a)+0.75*(vh)$);
\coordinate (bl) at ($(b)-0.4*(s)$);
\coordinate (br) at ($(b)+0.4*(s)$);
\draw[->] (bl) -- (br);

\coordinate (a) at ($(o)+12.5*(s)$);
\coordinate (at) at ($(a)+(vh)$);
\coordinate (ab) at ($(a)-(vh)$);
\draw[dashed] (ab) -- (at) node[above] {$0$};

\coordinate (a) at ($(o)+13.5*(s)$);
\coordinate (at) at ($(a)+(vh)$);
\coordinate (ab) at ($(a)-(vh)$);
\draw[dashed] (ab) -- (at) node[above] {$1$};

\coordinate (a) at ($22*(s)$);
\node[right] at (a)
  {$\rmL_k\, f \ne 0\tx{,}\, k < 1\tx{,}\, k \in \frac{3}{2} + 2\ZZ\tx{.}$};
\end{tikzpicture}%
\end{center}

\section{Theta lifts of harmonic weak Maa\ss\ forms}\label{sec:lifts}
In this section we review results on the extension of the classical Shintani lifting to harmonic weak Maa\ss\ forms and the so-called Millson theta lifting of such forms obtained by the first named author in joint work with Markus Schwagenscheidt \cite{alfesschwagenscheidtmillson, alfesschwagenscheidtshintani}. These liftings are an explicit realization of the theta correspondence given as the integral of an input function transforming like a modular form of (even) weight $k$ against a certain theta kernel function. 
We illustrate this procedure in a bit more detail. If $f$ transforms of weight $k$ one is led to consider the following integral (where the integration is carried over a suitable fundamental domain $\mathcal{F}$)
\[
\int_\mathcal{F} f(z) \overline{\Theta(\varphi, \tau, z)} \Im(z)^{-k} \frac{dxdy}{y^2}, \, z=x+iy.
\]
Here, $\Theta(\varphi, \tau, z)$ is an integration kernel of weight $k$ in the variable $z$. In the variable $\tau$ the complex conjugate $\overline{\Theta(\varphi, \tau, z)}$ is of half-integral weight $\ell$. Moreover, $\varphi$ is a suitable Schwartz function, Provided the integral converges, it transforms like an automorphic form of weight $\ell$.

For holomorphic modular forms such liftings have been investigated in the framework of the Shimura-Shintani-correspondence \cite{sh,shintani,kohnen80,kohnen82}. Ideas of Harvey and Moore \cite{harveymoore} and Bor\-cherds \cite{borcherds} led to the theory of regularized theta liftings allowing for inputs that are not holomorphic at the cusps. 
The lifts we consider in this work
serve as generating series of traces of CM values and (regularized) geodesic cycle integrals of the input function.

We will consider twisted versions of the Shintani and Millson lift. These are obtained via twisting the theta kernel with a certain genus character (see \cite{ae} for a description of this procedure). This enables us to state our results for the full modular group. In particular, we let $\Delta\in \mathbb{Z}$ be a fundamental discriminant. 

We denote the Millson theta lift by $\IM$ and the Shintani theta lift by $\ISh$.

\begin{remark}
We state the results in the following subsections for theta lifts of forms for the full modular group to half-integral weight forms for the group $\Gamma_0(4)$. Note that their results hold in greater generality (see \cite{alfesschwagenscheidtmillson, alfesschwagenscheidtshintani}). 
\end{remark}

\subsection{The Shintani lifting}

In the past years the classical Shintani theta lift of holomorphic forms has been generalized to weakly holomorphic modular forms by Bringmann, Guerzhoy and Kane \cite{briguka2,briguka} and to differentials of the third kind by Bruinier, Funke, Imamoglu and Li \cite{bifl}. In \cite{alfesschwagenscheidtshintani} the first author considered the Shintani lift of harmonic weak Maa\ss\ forms together with Schwagenscheidt. 

Their results can be summarized as follows. We do not state the explicit Fourier expansion (since we do not need it in the course of this paper). The Fourier coefficients of the holomorphic part are given by the regularized traces of geodesic cycle integrals of the integral weight form. 
\begin{theorem}[Proposition 5.2 and Theorem 6.1 in \cite{alfesschwagenscheidtshintani}]\label{thm:shin}
Consider $k\in\Z_{\geq 0}$ such that $(-1)^{k+1}\Delta >0$.
The regularized Shintani theta lift $\ISh_{\Delta}(G,\tau)$ of a harmonic Maa\ss\ form $G \in H_{2k+2}$ exists and defines a harmonic Maa\ss\ form in $H_{3/2+k}$. If\/ $G \in M_{2k+2}^{!}$ is a weakly holomorphic modular form then $\ISh_{\Delta}(G,\tau) \in M_{3/2+k}$ is a holomorphic modular form, and if in addition $a_{G}^{+}(0) = 0$ then $\ISh_{\Delta}(G,\tau) \in S_{3/2+k}$ is a cusp form.
\end{theorem}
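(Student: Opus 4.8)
The plan is to realize $\ISh_\Delta(G,\tau)$ as a regularized theta integral and to transport the relevant analytic properties from $G$ to its lift through the intertwining behaviour of the theta kernel. Write the lift as the regularized pairing $\ISh_\Delta(G,\tau) = \int^{\mathrm{reg}}_{\mathcal{F}} G(z)\,\overline{\Theta(\varphi,\tau,z)}\,\Im(z)^{-k}\,\tfrac{dx\,dy}{y^2}$, where $\Theta$ is the genus-character--twisted Shintani kernel, which transforms with weight $2k+2$ in $z$ (matching $G$) and with weight $3/2+k$ in $\tau$. Since $G\in H_{2k+2}$ is allowed to grow exponentially at the cusps, the naive integral diverges; following Harvey--Moore and Borcherds I would regularize by integrating over a truncated fundamental domain $\mathcal{F}_T$, inserting a spectral parameter $\Im(z)^s$ if needed, and extracting the constant term of the Laurent expansion at $s=0$ after letting $T\to\infty$. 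The first task, \emph{existence}, is to show that the Gaussian decay of $\Theta$ in the generic directions dominates the growth of $G$ off the cusp, so that all divergence is concentrated in a cuspidal neighbourhood and is isolated by the regularization.

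Modularity in $\tau$ for $\Gamma_0(4)$ (with the correct multiplier) is then inherited termwise from the transformation law of $\Theta$ in $\tau$, because the truncation and the parameter $s$ are independent of $\tau$. For \emph{harmonicity} I would invoke the Laplacian-intertwining property of Shintani-type kernels: up to the constants dictated by the weights, the weight-$(3/2+k)$ Laplacian $\Delta_{3/2+k}$ acting on $\Theta$ in $\tau$ coincides with the weight-$(2k+2)$ Laplacian $\Delta_{2k+2}$ acting on $\Theta$ in $z$, both being expressions of the same $\SL{2}$-Casimir of the Weil representation. Moving this $z$-Laplacian off the kernel and onto $G$ by Stokes' theorem and using $\Delta_{2k+2}G=0$ then forces $\Delta_{3/2+k}\ISh_\Delta(G)$ to equal a sum of boundary contributions alone.

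Controlling those boundary contributions is the step I expect to be the main obstacle. Because $G$ grows while $\Theta$ decays only polynomially in the cuspidal direction, the integrals over $\partial\mathcal{F}_T$ need a careful asymptotic analysis of the theta kernel near each cusp to show that they vanish as $T\to\infty$ (or cancel against the derivative-in-$s$ terms), yielding $\Delta_{3/2+k}\ISh_\Delta(G)=0$. This same near-cusp analysis delivers the Fourier expansion of the lift, from which I would read off that its holomorphic principal part is a polynomial in $q^{-1}$ and that its non-holomorphic part grows at most polynomially, establishing $\ISh_\Delta(G,\tau)\in H_{3/2+k}$.

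The two refinements follow from the compatibility of the lift with the $\xi$-operator. The non-holomorphic (shadow) part of $\ISh_\Delta(G)$ is governed by $\xi_{2k+2}G$ via the intertwining of the theta lift with $\xi$, so that $\xi_{3/2+k}\ISh_\Delta(G)$ is a constant multiple of a companion theta lift of $\xi_{2k+2}G$. If $G\in M^!_{2k+2}$ is weakly holomorphic then $\xi_{2k+2}G=0$, hence this shadow vanishes and $\ISh_\Delta(G)\in M_{3/2+k}$ is holomorphic. For the final assertion I would isolate the constant Fourier coefficient of the lift at each cusp from the expansion found above; this coefficient is proportional to $a^+_G(0)$, so the hypothesis $a^+_G(0)=0$ makes the lift vanish at every cusp, giving $\ISh_\Delta(G,\tau)\in S_{3/2+k}$.
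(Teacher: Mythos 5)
You should first note that the paper contains no proof of this statement at all: it is imported verbatim from Proposition~5.2 and Theorem~6.1 of \cite{alfesschwagenscheidtshintani} (the text explicitly says ``Their results can be summarized as follows''), so the only meaningful comparison is with the method of that reference. There the result is obtained not by a soft Stokes-type argument but by explicitly computing the Fourier expansion of the regularized lift (regularized traces of geodesic cycle integrals for the holomorphic part, explicit special-function terms for the rest) and by a first-order differential relation between the Shintani and Millson theta kernels --- precisely the relation $\xi_{3/2+k,\tau}\ISh_{\Delta}(G,\tau) = -\frac{1}{4^{k+1}\sqrt{|\Delta|}}\IM_{\Delta}(\xi_{2k+2,z}G,\tau)$ that the present paper records in the remark after Theorem~\ref{thm:millson}. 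Your overall strategy (regularize \`a la Harvey--Moore/Borcherds, inherit modularity from the kernel, control the cusp) is the right skeleton, but two of your central steps have genuine gaps.

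First, your harmonicity argument asks the boundary contributions over $\partial\mathcal{F}_T$ to vanish or cancel, and you correctly flag this as the main obstacle --- but in this setting they do \emph{not} vanish: for inputs with singularities at the cusps the boundary and regularization terms are exactly what produce nontrivial contributions to the lift, visible in this very paper in the $\beta_{3/2}$-terms of $E^{\ast}_{3/2}$ arising from $\ISh_\Delta(E_2^\ast)$. A correct proof must evaluate these terms, which is why the reference goes through the explicit expansion and the $\xi$-level kernel identity rather than a second-order Laplacian-intertwining plus Stokes argument. Second, you conclude membership in $H_{3/2+k}$ from ``the non-holomorphic part grows at most polynomially,'' but by the paper's definitions that only yields $H^{\mathrm{mg}}_{3/2+k}$; the space $H_{3/2+k}$ requires the shadow $\xi_{3/2+k}\ISh_\Delta(G)$ to be a \emph{cusp} form (equivalently, exponential decay of $f - P_f$ at all cusps). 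Establishing this requires the displayed $\xi$-identity together with the fact that the Millson lift of the shadow of $G$ is cuspidal --- a step you never carry out; you invoke the $\xi$-compatibility only in the degenerate case $\xi_{2k+2}G=0$. Your final step (constant term proportional to $a^{+}_{G}(0)$, hence cuspidality of the lift) is consistent with the reference but again presupposes the explicit Fourier expansion that your sketch defers.
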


\subsection{The Millson theta lifting}

In \cite{zagiertraces} Zagier considered traces of the values of the modular invariant $j(z)$ at quadratic irrationalities. He showed that these traces are the Fourier coefficients of modular forms of half-integral weight (both of weight $1/2$ and $3/2$). %

Using the framework of \cite{funkephd} Bruinier and Funke \cite{brfu06} showed that such modularity results in weight $3/2$ for generating series of traces of modular functions can be obtained via the Kudla-Millson theta lift. Their work was generalized in various directions: to twisted traces in \cite{ae}, to higher weight in \cite{brono2} and \cite{alfes}. In \cite{agor} and \cite{alfesschwagenscheidtmillson} a different theta lift, the so-called Millson theta lift, was considered which then fully recovered Zagier's results. %

We now briefly review the results of \cite{alfesschwagenscheidtmillson}.
We remark that the Fourier coefficients of the holomorphic part are given by the traces of CM values of a suitable derivative of the input.
\begin{theorem}[Theorem 1.1 in \cite{alfesschwagenscheidtmillson} and Proposition 5.5 in \cite{alfesschwagenscheidtshintani}]\label{thm:millson}
	Let $k \in \Z_{\geq 0}$ such that $(-1)^{k}\Delta < 0$ and let $F \in H_{-2k}^{\mathrm{mg}}$. 
	\begin{enumerate} \item Let $k\neq 0$. 
The Millson theta lift $\IM_{\Delta}(F,\tau)\in H_{1/2-k}^{\mathrm{mg}}$ is a harmonic weak Maa\ss\ form of weight ${1/2-k}$ for $\Gamma_{0}(4)$ satisfying the Kohnen plus space condition. Further, if $F$ is weakly holomorphic, then so is $\IM_{\Delta}(F,\tau)$.
\item  Let $k=0$ and let $F$ be such that the constant term of its non-holomorphic part vanishes. Then the Millson theta lift $\IM_{\Delta}(F,\tau)\in H^{\mathrm{mg}}_{1/2}$ is a harmonic weak Maa\ss\ form of weight ${1/2}$ for $\Gamma_{0}(4)$ satisfying the Kohnen plus space condition. 

\end{enumerate}
\end{theorem}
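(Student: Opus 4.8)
The plan is to exhibit $\IM_\Delta(F,\tau)$ as a regularized theta integral of the shape displayed above and to transport the automorphy, harmonicity, and growth of $F$ through the theta kernel. First I would fix a Schwartz function $\varphi$ on the rational quadratic space of signature $(1,2)$ attached to trace-zero $2\times2$ matrices (so that $\mathrm{SL}_2\cong\mathrm{Spin}$ acts), and build the Millson theta kernel $\Theta(\varphi,\tau,z)$, twisted by the genus character associated with the fundamental discriminant $\Delta$. The two structural properties to record are that, in the variable $z$, the kernel transforms with weight $-2k$ for $\mathrm{SL}_2(\Z)$ and decays rapidly in the cusp, while in $\tau$ its conjugate transforms with weight $\ell:=\tfrac12-k$ for $\Gamma_0(4)$ through the Weil representation of the discriminant form of the lattice. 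Granting these, automorphy of $\IM_\Delta(F,\tau)$ in weight $\ell$ is immediate, because the integrand of the displayed integral is $\mathrm{SL}_2(\Z)$-invariant in $z$ and carries all of its $\tau$-dependence through $\overline{\Theta}$.

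Second, because $F\in H^{\mathrm{mg}}_{-2k}$ is permitted exponential growth at the cusp, the integral over the noncompact $\mathcal{F}$ diverges and must be regularized in the sense of Harvey--Moore and Borcherds: truncate $\mathcal{F}$ at height $T$, integrate, and extract the finite part as $T\to\infty$ (equivalently, insert a factor $\Im(z)^{s}$ and continue in $s$). Proving that this regularized value exists and produces a real-analytic function of $\tau$ with the claimed moderate-growth behaviour is the analytic core. In the borderline case $k=0$ the output weight $\ell=\tfrac12$ is exactly where a nonvanishing constant term of the non-holomorphic part of $F$ would contribute a genuinely divergent piece; the hypothesis that this term vanishes is what restores convergence, which explains the dichotomy in the statement.

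Third, for harmonicity I would use the intertwining of Laplacians on the kernel: there is an identity of the form $\Delta_{\ell,\tau}\,\overline{\Theta}=c\,\Delta_{-2k,z}\,\overline{\Theta}$ up to a scalar multiple of $\overline{\Theta}$, reflecting the matching of Casimir eigenvalues under the theta correspondence. Applying $\Delta_\ell$ to $\IM_\Delta(F,\tau)$, moving it onto $\overline{\Theta}$, converting it into $\Delta_{-2k,z}$, and integrating by parts via Stokes' theorem on the truncated domain transfers the operator onto $F$; since $\Delta_{-2k}F=0$, the interior term vanishes and only boundary contributions from the truncation survive. Controlling these boundary terms---showing they vanish in the limit or assemble into something itself harmonic---is the step I expect to be the main obstacle, since it is precisely where the growth condition and the regularization interact. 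Finally, the Kohnen plus-space membership follows from the support of the Weil representation on the discriminant form of the lattice, and preservation of weak holomorphicity follows from the intertwining $\xi_\ell\,\IM_\Delta(F)=c'\,\ISh_\Delta(\xi_{-2k}F)$ with a nonzero constant $c'$; note that the sign condition $(-1)^k\Delta<0$ imposed here coincides with the condition $(-1)^{k+1}\Delta>0$ of Theorem~\ref{thm:shin}, so the same $\Delta$ governs both lifts. If $F$ is weakly holomorphic then $\xi_{-2k}F=0$, whence $\xi_\ell\,\IM_\Delta(F)=0$ and $\IM_\Delta(F,\tau)$ is weakly holomorphic, as claimed.
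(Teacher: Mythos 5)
Note first that the paper does not prove this theorem: it is imported verbatim from Theorem~1.1 of \cite{alfesschwagenscheidtmillson} and Proposition~5.5 of \cite{alfesschwagenscheidtshintani}, so the only proof to compare against lives in those references. Your sketch reconstructs their strategy faithfully: the Millson kernel on the signature~$(1,2)$ space of trace-zero matrices, twisted by the genus character attached to the fundamental discriminant~$\Delta$; automorphy in~$\tau$ through the Weil representation of the discriminant form, with the Kohnen plus space condition coming from the support of that representation; Harvey--Moore/Borcherds regularization of the divergent integral; harmonicity via an intertwining of Laplacians on the kernel plus Stokes' theorem on the truncated fundamental domain; and preservation of weak holomorphy via the $\xi$-intertwining, which is literally the Remark the paper states right after the theorem, namely $\xi_{1/2-k,\tau}\IM_{\Delta}(F,\tau) = -4^{k}\sqrt{|\Delta|}\,\ISh_{\Delta}(\xi_{-2k,z}F,\tau)$, so your constant is $c' = -4^{k}\sqrt{|\Delta|} \neq 0$ and your final step is sound: a harmonic form with vanishing $\xi_{\ell}$-image is holomorphic on~$\HS$, and the growth condition then gives weak holomorphy. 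Your observation that $(-1)^{k}\Delta<0$ is the same condition as $(-1)^{k+1}\Delta>0$ in Theorem~\ref{thm:shin} is also correct, and it is exactly what makes the two lifts compatible in that identity.

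Two imprecisions are worth flagging, neither fatal at sketch level. First, the Laplacian identity on the kernel is exact, of the shape $\Delta_{1/2-k,\tau}\,\Theta(\tau,z) = \tfrac{1}{4}\,\Delta_{-2k,z}\,\Theta(\tau,z)$; there is no additional scalar multiple of~$\overline{\Theta}$ as you allow for, and permitting one would in fact spoil the argument, since the lift would then come out as an eigenfunction with nonzero eigenvalue rather than harmonic. Second, you correctly identify the boundary terms at the truncation height as the crux but leave them unresolved; in the references this is handled by computing the asymptotic expansion of the twisted Millson kernel as $\Im(z)\to\infty$ and pairing it term by term with the Fourier expansion of~$F$, so that only finitely many explicit contributions can survive the limit. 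At $k=0$ the one surviving contribution is precisely the pairing against the constant term $c_{F}^{-}(0)\,\Im(z)$ of the non-holomorphic part, which is why that hypothesis appears; the mechanism is thus a nonvanishing remainder in the Stokes argument (obstructing the harmonicity conclusion) rather than, as you suggest, a divergence of the regularized integral that the hypothesis repairs. With those two corrections your outline matches the cited proof.
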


\begin{remark}
The Millson and Shintani lifting are related via a differential equation satisfied by the two theta kernels:
\begin{align*}
	\xi_{1/2-k,\tau}\IM_{\Delta}(F,\tau)& = -4^{k}\sqrt{|\Delta|}\ISh_{\Delta}(\xi_{-2k,z}F,\tau)\\
		\xi_{3/2+k,\tau}\ISh_{\Delta}(G,\tau) &= -\frac{1}{4^{k+1}\sqrt{|\Delta|}}\IM_{\Delta}(\xi_{2k+2,z}G,\tau).
	\end{align*}
\end{remark}

\section{Examples}

In this section we give explicit examples for each of the cases occurring in Section~\ref{subsec:diagrams}. These are given as the Millson theta lifts of forms of weight $-2k\leq 0$ and Shintani theta lifts of forms of weight $2k\geq 2$. We denote the integral weights by $2k\in 2\ZZ$ and the half-integral weights by $\ell\in \frac12\ZZ\setminus \ZZ$. 

\begin{remark}
To realize the cases associated to half-integral weight it suffices to consider lifts of the scalar-valued examples that occur in \cite{bringmannkudla}. Nonetheless, it would be interesting to extend the theory of theta liftings to symmetric power types along the lines of Funke and Millson's work \cite{funkemillson}.
\end{remark}

\subsection{Weight $\ell\leq \frac12$}
We first let $\ell\leq \frac12$ and need to provide functions $f\in H_{\ell=1/2-k}^{\text{mg}}$ that satisfy $\rmL_\ell f=0$ and $\rmL_\ell f\neq 0$. 

\subsubsection{Weight $\ell\leq \frac12$ and $\rmL\ell f=0$}
First note that the Millson lift of the constant function, that gives an example for case I (a) in \cite{bringmannkudla} (i.e., it satisfies $\rmL_0 f=0$ and $\rmR_0 f=0$), is a weakly holomorphic modular form of weight $\ell=1/2$ as can easily be deduced from Proposition 3.4.8 in \cite{markusdiss}.

If $\ell\leq -1/2$, we can take the Millson lift of a weakly holomorphic modular form of weight $-2k<0$ (compare case I (b) in \cite{bringmannkudla}). We see from Theorem \ref{thm:millson} that the Millson lift of a weakly holomorphic modular form $F\in M^!_{-2k}$ is again weakly holomorphic of weight $\ell=1/2-k$ if $-2k<0$. 

\subsubsection{Weight $\ell\leq \frac12$ and $\rmL\ell f\neq 0$}
If $k=0$, the lift lies in the space of harmonic weak Maa\ss\ forms $H_{1/2}^{\mathrm{mg}}$. This already realizes the case of weight $\ell=1/2$ when we require $\rmL_\ell f\neq 0$. For $\ell\leq -1/2$ we consider a  function $f$ satisfying $\rmL_{-2k} f\neq 0$ and $\rmR_{-2k}^{1+2k} f=0$ (corresponding to case I (c) in \cite{bringmannkudla}). We lift the realization of \cite{bringmannkudla}: We let $F\in M_{-2k}^!\setminus\{0\}$ and take $G:=\calF_{-2k}F$. Note that if 
\[
F(z)=\sum_{n\gg-\infty} c_F^+(n)q^n\in M^!_{-2k},
\]
then, compare \eqref{eq:feflip}, we have
\[
\mathcal{F}_{-2k}(F(z))= -\overline{c_F^+(0)} - \frac{1}{(2k)!} \sum_{\substack{n\ll\infty\\n\neq 0}} \overline{c_F^+(-n)} \Gamma(1+2k, -4\pi n v)q^n.
\]
From Theorem \ref{thm:millson} we easily deduce that the lift of $G$ is in $H^{\mathrm{mg}}_{1/2-k}$.

\begin{remark}
For the sake of completeness we explain the Millson lift of the remaining cases that Bringmann and Kudla consider. Their case IV (d) $\rmL_{-2k} f\neq 0$ and $\rmR_{-2k}^{1+2k} f\neq 0$ is realized by letting $F\in M_{-2k}^!\setminus\{0\}$ and taking $G:=F+\calF_{-2k}F$. Lifting this we obviously obtain a combination of the previous two cases.

Moreover, we note that the lift of the Eisenstein series 
is again an Eisenstein series of weight $1/2-k$. This can be shown by standard arguments (for example using work of Crawford and Funke\cite{funkecrawford}).
\end{remark}

\begin{remark}
We remark that the weight $k=0$ ($\ell=1/2$) case can also be realized by the Siegel lift investigated in \cite{bif}.
\end{remark}

\subsection{Weight $\ell \geq \frac32$}

\subsubsection{Weight $\ell \geq \frac32$ and $\rmL_\ell f=0$}
Considering the Shintani lift of cusp forms of integral weight $2k\geq 2$ we see that these give us examples of harmonic weak Maa\ss\ forms of weight $\ell =3/2+k\geq 3/2$ satisfying $\rmL_\ell f=0$. Cusp forms of integral weight correspond to case III (a) in the classification of Bringmann and Kudla.

\subsubsection{Weight $\ell \geq \frac32$ and $\rmL_\ell f\neq 0$}
We can realize the case of $\rmL_\ell f\neq 0$ by taking the Shintani lift of the weight $2$ Eisenstein series (case III (b) in \cite{bringmannkudla}) and sesquiharmonic Poincar\'e series (case III (c) in \cite{bringmannkudla}). 

To give an example for a function $f$ with $\rmL_{3/2} f\neq0$, we consider the lift of the weight $2$ Eisenstein series
\[
E_2^*(z)= 1-24\sum_{n \geq 1}\sigma_{1}(n)e^{2\pi i n z} - \frac{3}{\pi y}.
\]
It was computed in \cite{alfesschwagenscheidtshintani}. We have
	\[
	\sqrt{|\Delta|}\ISh_{\Delta}(E_{2}^{*},\tau) = 12H(|\Delta|)E_{3/2}^{*}(\tau),
	\]
	where
\begin{align*}
	E_{3/2}^{*}(\tau) = \sum_{D \geq 0}H(D)e^{2\pi i D\tau}+\frac{1}{16\pi}\sum_{n \in \Z}v^{-1/2}\beta_{3/2}(4\pi n^{2}v)e^{-2\pi i n^{2}\tau},
	\end{align*}
	with $H(0) = -\frac{1}{12}$ and $H(D) = 0$ if $-D \neq 0$ is not a discriminant, is Zagier's weight $3/2$ Eisenstein series (see \cite{zagiereisenstein}). Moreover, $\beta_{3/2}(s)$ is defined as in the introduction.

The third case in \cite{bringmannkudla} is characterized by $\rmL_{2k} f \neq 0$ and $\rmL_{2k}^{2k} f\neq 0$. An example is constructed via certain sesquiharmonic Poincar\'e series that are in fact harmonic (this relies on the vanishing of the dual space of cusp forms). We do not explicitly compute the lift of such series but note that according to Theorem \ref{thm:shin} the lift is a harmonic weak Maa\ss\ form of moderate growth of weight $\ell=3/2+k$.

\begin{remark}
We remark that the weight $3/2$ case can also be realized as the Kudla-Millson lift of a harmonic weak Maa\ss\ form of weight $0$ as in \cite{brfu06}.
\end{remark}

\ifbool{nobiblatex}{%
  \bibliographystyle{alpha}%
  \bibliography{bib.bib}%
}{%
  \renewcommand{\baselinestretch}{.8}
  \Needspace*{4em}
  \printbibliography[heading=none]%
}

\Needspace*{3\baselineskip}
\noindent
\rule{\textwidth}{0.15em}

{\noindent\small
Universität Bielefeld, Fakultät für Mathematik, Postfach 100 131, 33501 Bielefeld, Germany\\
E-Mail: \url{alfes@math.uni-bielefeld.de}\\

\noindent
Chalmers tekniska högskola och G\"oteborgs Universitet,
Institutionen för Matematiska vetenskaper,
SE-412 96 Göteborg, Sweden\\
E-mail: \url{martin@raum-brothers.eu}\\
Homepage: \url{http://raum-brothers.eu/martin}

}%

\end{document}

